\documentclass[12pt]{amsart}
\usepackage{amssymb}
\usepackage{amsmath}
\usepackage{amssymb}
\usepackage{amsthm}
\usepackage{enumerate}
\usepackage[all,arc,curve,color,frame]{xy}
\usepackage[utf8]{inputenc}

\DeclareMathOperator{\End}{End}

\DeclareMathOperator{\tr}{tr}
\DeclareMathOperator{\ad}{ad}

\newcommand{\A}{{\mathcal A}}
\newcommand{\B}{{\mathcal B}}

\newcommand{\E}{{\mathcal E}}
\newcommand{\F}{{\mathcal F}}

\newcommand{\I}{{\mathcal I}}

\newcommand{\K}{{\mathcal K}}

\renewcommand{\P}{{\mathcal P}}

\newcommand{\N}{{\mathbb N}}

\newcommand{\C}{\ensuremath{\mathbb{C}}}
\newcommand{\R}{\ensuremath{\mathbb{R}}}

\newcommand{\p}{\partial}

\newcommand{\unit}{{\bf{1}}}

\newcommand{\U}{{\mathcal U}}
\newcommand{\V}{{\mathcal V}}

\newcommand{\x}{{x_0}}

\newcommand{\Z}{\ensuremath{\mathbb{Z}}}
\newcommand*\xbar[1]{%
   \hbox{%
     \vbox{%
       \hrule height 0.5pt 
       \kern0.3ex
       \hbox{%
         \kern-0.2em
         \ensuremath{#1}%
         \kern-0.08em
       }%
     }%
   }%
}
\newtheorem{lemma}{Lemma}[section]
\newtheorem{proposition}{Proposition}[section]
\newtheorem{theorem}{Theorem}[section]
\newtheorem{corollary}{Corollary}[section]
\begin{document}

\title[Formal oscillatory integrals]
{Formal oscillatory integrals and deformation quantization}
\author[Alexander Karabegov]{Alexander Karabegov}
\address[Alexander Karabegov]{Department of Mathematics, Abilene
Christian University, ACU Box 28012, Abilene, TX 79699-8012}
\email{axk02d@acu.edu}

\subjclass[2010]{53D55, 81Q20}
\keywords{deformation quantization, oscillatory integral, stationary phase}

\maketitle

\begin{abstract}
Following \cite{L} and \cite{KS}, we formalize the notion of an oscillatory integral interpreted as a functional on the amplitudes supported near a fixed critical point $\x$ of the phase function with zero critical value.
We relate to an oscillatory integral two objects, a formal oscillatory integral kernel and the full formal asymptotic expansion at $\x$. The formal asymptotic expansion is a formal distribution supported at $\x$ which is applied to the amplitude.
In \cite{KS} this distribution itself is called a formal oscillatory integral (FOI). We establish a correspondence between the formal oscillatory integral kernels and the FOIs based upon a number of axiomatic properties of a FOI expressed in terms of its formal integral kernel. Then we consider a family of polydifferential operators related to a star product with separation of variables on a pseudo-K\"ahler manifold. These operators evaluated at a point are FOIs. We completely identify their formal oscillatory kernels. \end{abstract}

\section{Introduction}

Several constructions of star products on symplectic manifolds are obtained from full asymptotic expansions of some oscillatory integrals depending on a small parameter $h$. In the resulting expansion the numerical parameter $h$ is replaced with the formal parameter $\nu$.

The most important examples are the asymptotic expansions of the composition formula of Weyl symbols which leads to the Moyal-Weyl star product and of the Berezin transform which leads to the Wick and anti-Wick star products or, more generally, to the Berezin and Berezin-Toeplitz star products (see, e.g., \cite{E} and \cite{KS}).

It turns out that the formal distribution obtained from the asymptotic expansion of an oscillatory integral and supported at a critical point~$\x$ of the phase function still ``remembers" some formal version of the oscillatory integral from which it was obtained. One can recover the full jet of the formal oscillatory integral kernel at $\x$ from such a formal distribution  and the formal distribution itself can be obtained from the full jet of the formal oscillatory integral kernel at $\x$. 

The stationary phase method for a real phase function was completely formalized in \cite{L}.  Namely, the formal oscillatory integral
\begin{equation}\label{E:Leray}
   J(f) = \nu^{-\frac{n}{2}}\int_{(\x)} e^{\nu^{-1} \psi(x)} f(x) dx,
\end{equation}
where $\psi$ is a real phase function on $\R^n$ with a nondegenerate critical point $\x$ and $f \in C^\infty(\R^n)$ is an amplitude supported near $\x$, is expressed in \cite{L} by an explicit formula as a full formal asymptotic series multiplied by the formal oscillatory exponent
\[
       \exp\left\{\nu^{-1} \psi(\x) \right\}.
\]
This formula contains the square root of the determinant of the Hessian of $\psi$ at $\x$ and depends on the index of inertia of the Hessian. Moreover, the numerical parameter in the non-formal oscillatory integral which produces this asymptotic series in \cite{L} is purely imaginary. If the critical value of the phase function is zero, $\psi(\x)=0$, then the functional $f \mapsto J(f)$ is a formal distribution supported at $\x$.

In this paper we extend the formal asymptotic expansion from \cite{L} to the case when the phase function is complex. Given the formal integral (\ref{E:Leray}) with a general complex phase function, we determine the corresponding asymptotic series only up to a sign. However, we are able to fix the sign in the important case when the Hessian of the phase function is of the Hermitian type.

In deformation quantization on a manifold $M$ we encounter various formal polydifferential operators. Given an operator $C(f_1, \ldots,f_l)$ and a point $\x \in M$, we consider the formal distribution
\begin{equation}\label{E:evalp}
     f_1, \ldots, f_l \mapsto C(f_1, \ldots,f_l)(\x)
\end{equation}
on $M^l$ supported at the point $\x^{(l)} := (\x, \ldots, \x) \in M^l$. Sometimes such distributions are given by formal oscillatory integrals. Our primary goal is to identify their formal oscillatory kernels. These formal kernels can be interpreted as quantum geometric objects which encode important information about the star product. A similar quantum geometrical approach to formal oscillatory functions is developed in~\cite{V}.

The formal distribution obtained from a formal oscillatory integral satisfies several axiomatic properties expressed in terms of the formal oscillatory kernel. These properties correspond to the integration by parts and differentiation with respect to the formal parameter of the formal integral. In particular examples, if we have a conjectural oscillatory kernel of a formal distribution, we can verify it by checking the axioms.

In \cite{KS} a list of axioms is given which allows to associate a class of formal oscillatory kernels with a formal distribution determined up to a formal multiplicative constant. In \cite{KS} and in this paper such distributions themselves are called formal oscillatory integrals (FOI), which is a slight abuse of terminology. In this paper we add more axioms which allows to obtain a bijection between the formal oscillatory kernels and the formal oscillatory integrals. The more axioms we add, the tighter is the connection between the formal kernels and the FOIs. One can check by a tedious calculation that the asymptotic expansion of (\ref{E:Leray}) from \cite{L} satisfies our axioms.

The formal oscillatory kernel considered in \cite{KS} and in this paper has a more general format than that in (\ref{E:Leray}). It contains a formal phase function and a formal volume form on a manifold, and the exposition is coordinate independent. 
In \cite{KS} two important examples of FOIs related to deformation quantization with separation of variables on a pseudo-K\"ahler manifold $M$ were considered. They are obtained from the formal Berezin transform and the so called twisted product. Given a star product with separation of variables on a manifold $M$, we consider a family of polydifferential operators related to it. Evaluating these operators at a point $\x \in M$ as in (\ref{E:evalp}), we obtain for each $l \in \N$ a FOI $K^{(l)}$ on $M^l$ at the point $\x^{(l)} \in M^l$ and determine its formal oscillatory integral kernel. It includes the two examples from \cite{KS} as $K^{(1)}$ and $K^{(2)}$, respectively. In the non-formal integrals in the Berezin's quantization formalism the small parameter~$h$ is positive and the Hessian of the phase function is of the Hermitian type, which affects some of the conventions used in our paper.

\medskip

{\bf Acknowledgments.} I want to express my gratitude to Theodore Voronov for an important discussion of the formal stationary phase method which has helped me to revise and improve the exposition.

\section{Formal oscillatory integrals}

Given a vector space $V$, denote by $V((\nu))$ the space of formal vectors
\[
     v = \nu^k v_k + \nu^{k+1} v_{k+1} + \ldots,
\] 
where $k \in \Z$ and $v_r \in V$ for $r \geq k$. Let $M$ be an oriented manifold~\footnote{We require throughout the paper that the manifold $M$ be oriented  in order to deal with volume forms instead of densities.} of dimension $n$ and let $\x$ be a fixed point in $M$. Consider a complex formal phase function $\varphi = \nu^{-1} \varphi_{-1} + \varphi_0 + \ldots$ on $M$ such that $\x$ is a nondegenerate critical point of the function $\varphi_{-1}$ with zero critical value, $\varphi_{-1}(\x)=0$, and a formal volume form $\rho = \rho_0 + \nu \rho_1 + \ldots$ on $M$ such that $\rho_0$ does not vanish at~$\x$. We want to interpret the following formal expression,
\begin{equation}\label{E:Lerayinv}
   J(f) = \nu^{-\frac{n}{2}}\int_{(\x)} e^{\varphi} f \, \rho,
\end{equation}
where $f \in C^\infty(M)((\nu))$, as a formal distribution supported at $\x$ which satisfies formal algebraic properties of an integral and agrees with the stationary phase method. To this end we give an axiomatic definition of what is called a formal oscillatory integral (FOI) in \cite{KS}. Let $\varphi$ and~$\rho$ be as above.

{\it A FOI on the manifold $M$ at the point $\x$ associated with the pair $(\varphi,\rho)$ is a formal distribution $\Lambda = \Lambda_0 + \nu \Lambda_1 + \ldots$ supported at $\x$ such that $\Lambda_0$ is nonzero and the condition
\begin{equation}\label{E:axiom}
    \Lambda(vf + (v \varphi + \mathrm{div}_\rho v)f)=0
\end{equation}
holds for any vector field $v$ and any function $f$ on~$M$.}

In (\ref{E:axiom})  $\mathrm{div}_\rho v$ denotes the divergence of the vector field $v$ with respect to $\rho$ given by the formula
\[
     \mathrm{div}_\rho v = \frac{\mathbb{L}_v \rho}{\rho},
\]
where $\mathbb{L}_v$ is the Lie derivative with respect to $v$. Property (\ref{E:axiom}) of a FOI corresponds to the formal integration by parts of the integral in~(\ref{E:Lerayinv}). Observe that this definition does not use the assumption that $\varphi_{-1}(\x)=0$.

We prove in the Appendix that for every pair $(\varphi,\rho)$ there exists an associated FOI. It was shown in \cite{KS} that two FOIs associated with the same pair $(\varphi,\rho)$ differ by a formal multiplicative constant $c=c_0 + \nu c_1 + \ldots$ with $c_0 \neq 0$. In particular, for each pair $(\varphi,\rho)$ there exists a unique FOI $\Lambda$ associated with it such that $\Lambda(1)=1$.

It is clear that the definition of a FOI depends only on the full jets of $\varphi$ and $\rho$ at $\x$. In particular, we can restrict ourselves to the germ of the pair $(\varphi,\rho)$ at $\x$. We will call two pairs $(\varphi,\rho)$ and $(\hat\varphi,\hat\rho)$ on a neighborhood $U$ of $\x$ {\it equivalent} if there exists a formal function $u \in C^\infty(U)[[\nu]]$ such that
\[
     \hat\varphi = \varphi + u\mbox{ and } \hat\rho = e^{-u}\rho
\]
on $U$. The expression
\[
      v \varphi + \mathrm{div}_\rho v
\]
does not change if we replace the pair $(\varphi,\rho)$ with an equivalent pair. Therefore, if a FOI $\Lambda$ is associated with a pair $(\varphi,\rho)$, it is also associated with any equivalent pair. We see that it is natural to write the pair $(\varphi,\rho)$ in the form of the ``formal oscillatory integral kernel" 
\[
\nu^{-\frac{n}{2}}e^\varphi \rho
\]
of (\ref{E:Lerayinv}), because it is invariant with respect to the transformation $(\varphi,\rho) \mapsto (\varphi+u, e^{-u}\rho)$. We will sometimes refer to the pair $(\varphi,\rho)$ itself as to a formal oscillatory kernel associated with $\Lambda$.

It follows from (\ref{E:axiom}) that $\Lambda_0 \left((v\varphi_{-1})\cdot f\right) = 0$ for any vector field $v$ and any function $f$ on~$M$. Since $\x$ is a nondegenerate critical point of $\varphi_{-1}$ and $\Lambda_0$ is a nontrivial distribution supported at $\x$, one can verify that
\[
      \Lambda_0 = \alpha \delta_{\x},
\]
where $\alpha$ is a nonzero complex number and $\delta_{\x}$ is the Dirac distribution at $\x$, $\delta_{\x} (f) = f(\x)$.

If there is a pair $(\varphi,\rho)$ and an arbitrary formal volume form $\hat\rho = \hat\rho_0 + \nu \hat\rho_1+\ldots$ on a neighborhood of $\x$ such that $\hat\rho_0$ does not vanish at $\x$, then one can find a formal phase $\hat\varphi$ (possibly on a smaller neighborhood of $\x$) such that the pairs $(\varphi,\rho)$ and $(\hat\varphi,\hat\rho)$ are equivalent. Therefore, if we compare the equivalence classes of two pairs, $(\varphi,\rho)$ and $(\hat\varphi,\hat\rho)$, we can always assume that $\rho=\hat\rho$.

In the next section we will show that if two pairs $(\varphi,\rho)$ and $(\hat\varphi,\rho)$ are associated with the same FOI at a point $\x$, then the full jet of $\varphi-\hat\varphi$ at $\x$ is a formal constant.

\section{A pairing on the space of jets of formal functions}

\begin{proposition}\label{P:nondeg}
Let $\Lambda$ be a FOI on a manifold $M$ at a point $\x$. Then the pairing
\begin{equation}\label{E:pair}
   f, g \mapsto \Lambda(fg)
\end{equation}
on $C^\infty(M)((\nu))$ induces a nondegenerate pairing on the space of full jets of formal functions at $\x$.
\end{proposition}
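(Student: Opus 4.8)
The plan is to fix local coordinates $x^1,\dots,x^n$ centered at $\x$ (so $\x=0$) and to regard the induced map as a symmetric $\C((\nu))$-bilinear form on the space $\mathcal J$ of full jets at $\x$, which in these coordinates is $\C[[x]]((\nu))$. Since $\Lambda$ is supported at $\x$, the value $\Lambda(fg)$ depends only on the jets of $f$ and $g$, so the pairing descends to $\mathcal J$; as it is symmetric, it suffices to show that $\Lambda(fg)=0$ for all $g$ forces the jet of $f$ to vanish. Writing $\rho=r\,dx^1\wedge\cdots\wedge dx^n$ with $r(\x)\ne0$ and $\Phi=\varphi+\log r$, one has $\mathrm{div}_\rho\partial_i=\partial_i\log r$, so axiom (\ref{E:axiom}) with $v=\partial_i$ reads $\Lambda(\partial_i f+(\partial_i\Phi)f)=0$. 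Taking $f=x^\gamma$ and expanding $\partial_i\varphi_{-1}=\sum_j A_{ij}x^j+h_i$, where $A_{ij}=\partial_i\partial_j\varphi_{-1}(\x)$ is the nondegenerate symmetric Hessian and $h_i=O(|x|^2)$, yields a recursion among the moments $m_\beta:=\Lambda(x^\beta)$ from which everything is read off.

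The heart of the matter, and the step I expect to be the \textbf{main obstacle}, is an a priori estimate: writing $\Lambda=\sum_{k\ge0}\nu^k\Lambda_k$ with each $\Lambda_k$ a distribution supported at $\x$ (hence of finite order), I claim $\Lambda_k(x^\beta)=0$ whenever $|\beta|\ge 2k+1$, equivalently $\mathrm{ord}_\nu m_\beta\ge\lceil|\beta|/2\rceil$. I would prove this by induction on $k$, the base case being $\Lambda_0=\alpha\delta_{\x}$. Extracting the coefficient of $\nu^k$ in the recursion and applying the inductive hypothesis to the lower-order terms reduces it to $\Lambda_k(x^{\gamma+e_l})=-\sum_i(A^{-1})_{li}\Lambda_k(h_ix^\gamma)$. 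The difficulty is that, because the phase is not purely quadratic, the factor $h_i$ \emph{raises} the degree, so this does not close under a naive induction on $|\beta|$. The resolution is to exploit the finiteness of $\mathrm{ord}(\Lambda_k)$: if some monomial of degree $\ge 2k+1$ were not annihilated, choosing one of maximal degree and feeding it into the relation would express it through monomials of strictly larger degree, all killed by maximality, a contradiction.

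Granting the estimate, I introduce the weight grading with $x^i$ of weight $1$ and $\nu$ of weight $2$; then $\mathrm{ord}_\nu\Lambda(h)\ge\tfrac12 w(h)$, so the leading $\nu$-coefficient defines a functional $\tau$ on the associated graded algebra $\C[x,\nu]$, whose value on a weight-homogeneous element is recovered from a functional $\mu$ on $\C[x]$ via $\nu\mapsto1$. A direct computation from the recursion at the critical order shows that the leading moments $\mu_\beta:=\Lambda_{|\beta|/2}(x^\beta)$ (for $|\beta|$ even, and $0$ otherwise) satisfy the Gaussian recursion $\sum_j A_{ij}\mu_{\gamma+e_j}+\gamma_i\mu_{\gamma-e_i}=0$ with $\mu_0=\alpha$; all the $h_i$- and positive-order corrections drop out for weight reasons. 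Since this recursion together with $\mu_0$ determines every moment uniquely, $\mu$ equals $\alpha$ times the Gaussian/Wick moment functional of the nondegenerate symmetric form $A$, and the pairing $p,q\mapsto\mu(pq)$ on $\C[x]$ is nondegenerate: over $\C$ one reduces $A$ to the identity by a linear change of variables and invokes the orthogonality of the Hermite basis, whose norms are nonzero.

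Finally I would assemble these pieces. Given a jet $f\ne0$, after rescaling by a power of $\nu$ I may assume $f=f_0+\nu f_1+\cdots$ with $f_0\ne0$; let $a=w(f)$ and let $F\ne0$ be its weight-$a$ leading part, so $p=F|_{\nu=1}\in\C[x]$ is nonzero (the map $\nu^j x^\beta\mapsto x^\beta$ is injective on each weight-homogeneous component, and $p$ has pure degree-parity $a\bmod 2$). By nondegeneracy of the Gaussian pairing there is $q$ of the same degree-parity with $\mu(pq)\ne0$; lifting $q$ to a weight-homogeneous $g\in\C[x,\nu]$ with $g|_{\nu=1}=q$ (possible precisely because of the parity match), the weight is additive and the symbol multiplicative, so the coefficient of $\nu^{(a+w(g))/2}$ in $\Lambda(fg)$ equals $\tau(FG)=\mu(pq)\ne0$. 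Hence $\Lambda(fg)\ne0$, the jet of $f$ is not in the radical, and by symmetry the induced pairing on $\mathcal J$ is nondegenerate.
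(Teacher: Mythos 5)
Your argument is correct, but it follows a genuinely different route from the one in the paper. The paper's proof stays entirely qualitative: it introduces the sets $\mathcal{Z}_k$ of products of partial derivatives of the phase $\psi$ and runs a double induction (on the total order $r$ and on how many derivatives have been transferred onto $f$) to show that if $\Lambda(fg)=0$ for all $g$, then $\Lambda\bigl(\p^r f/\p x^{i_1}\ldots\p x^{i_r}\bigr)=0$ for all $r$; only the leading term $\Lambda_0=\alpha\delta_{\x}$ is then needed to kill the full jet of $f$, and neither the higher components $\Lambda_k$ nor the Hessian are analyzed directly. You instead make the structure of $\Lambda$ quantitative: you first prove the order estimate $\Lambda_k(x^\beta)=0$ for $|\beta|\ge 2k+1$ (your maximal-degree descent using the finite order of each $\Lambda_k$ does close the induction, since the troublesome term $h_ix^\gamma$ strictly raises the vanishing order), then pass to the associated graded functional for the weight $|x^i|=1$, $|\nu|=2$, identify it with $\alpha$ times the Gaussian moment functional of the (nondegenerate, complex symmetric) Hessian, and conclude by Hermite orthogonality together with the parity bookkeeping needed to lift a polynomial test function to a weight-homogeneous one. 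Your route is longer and relies on more input (invertibility of the Hessian at every step, the algebraic nondegeneracy of the Gaussian pairing), but it buys strictly more information: it shows that every FOI respects the standard filtration and that its leading symbol is the Gaussian/Wick functional, which is an all-orders strengthening of Lemma \ref{L:Lambda1} and matches the explicit model $\exp(\nu\Delta)f|_{x=0}$ constructed in the Appendix. The paper's proof buys brevity and avoids any appeal to the classification of distributions supported at a point beyond $\Lambda_0=\alpha\delta_{\x}$.
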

\begin{proof}
Since $\Lambda$ is a formal distribution supported at $\x$, the pairing (\ref{E:pair}) depends only on the full jets of $f$ and $g$ at $\x$.

Let $U \subset M$ be a contractible coordinate chart containing $\x$ with local coordinates $\{x^i\}, i = 1, \ldots,n$, where $n = \dim M$. There exists a pair $(\psi, dx)$, where $\psi \in C^\infty(U)((\nu))$ and $dx = dx^1 \wedge \ldots \wedge dx^n$, which is associated with $\Lambda$. Condition (\ref{E:axiom}) is equivalent to the following one,
\begin{equation}\label{E:axiom2}
      \Lambda\left (\frac{\p f}{\p x^i} + \frac{\p \psi}{\p x^i}f\right)=0
\end{equation}
for all $i$ and $f \in C^\infty(U)((\nu))$.

Let $\mathcal{Z}_k, k \geq 0,$ be the set of formal functions on $U$ that can be represented as products of partial derivatives of $\psi$ of positive order such that the total order of all partial derivatives in the product is~$k$. Thus, 
 \[
 \mathcal{Z}_0 = \{1\}, \mathcal{Z}_1 = \left\{\frac{\p \psi}{\p x^i}\right\}, \mathcal{Z}_2 = \left\{\frac{\p \psi}{\p x^i}\frac{\p \psi}{\p x^j}, \frac{\p^2 \psi}{\p x^i \p x^j}\right\}, \ldots,
 \]
 where the indices are arbitrary. It will be convenient to denote by $Z_{i_1 \ldots i_k}$ any monomial from $\mathcal{Z}_k$ with the indices of the partial derivatives labeled by $i_1, \ldots, i_k$ in any particular order. For example, one can set
 \[
     Z_{i_1 i_2 i_3} := \frac{\p \psi}{\p x^{i_2}} \frac{\p^2 \psi}{\p x^{i_1} \p x^{i_3}} \in \mathcal{Z}_3.
 \]
Given $Z_{i_1 \ldots i_k} \in \mathcal{Z}_k$, we see that $\p Z_{i_1 \ldots i_k}/\p x^{i_{k+1}}$ is a sum of monomials from $\mathcal{Z}_{k+1}$ and 
\[
\frac{\p \psi}{\p x^{i_{k+1}}} \cdot Z_{i_1 \ldots i_k} \in \mathcal{Z}_{k+1}.
 \]
 Suppose that $f$ is a formal function such that 
 \begin{equation}\label{E:assump}
 \Lambda (fg)=0
 \end{equation}
 for any formal function $g$. Fix an arbitrary positive integer $r$. We will prove by induction on $k, 0 \leq k \leq r,$ that
 \begin{equation}\label{E:step}
     \Lambda \left(\frac{\p^r f}{\p x^{i_{1}} \ldots x^{i_k}}Z_{i_{k+1} \ldots i_r} \right)=0
 \end{equation}
 for any indices $i_1, \ldots, i_k$ and any monomial $Z_{i_{k+1} \ldots i_r} \in \mathcal{Z}_{r-k}$.
 When $k=0$,  by setting $g=Z_{i_1 \ldots i_r}$ in (\ref{E:assump}), we see that $\Lambda(f Z_{i_1 \ldots i_r})=0$. Assume that (\ref{E:step}) holds for $k= p-1$. We will prove that it holds for $k=p$.
 We have from (\ref{E:axiom2}) that
  \begin{equation}\label{E:lproduct}
     \Lambda\left( \left(\frac{\p\psi}{\p x^{i_p}}+ \frac{\p}{\p x^{i_p}}\right) \left(\frac{\p^r f}{\p x^{i_{1}} \ldots x^{i_{p-1}}} Z_{i_{p+1}\ldots i_r} \right ) \right)=0
  \end{equation}
 for any indices $i_1, \ldots,i_r$ and any $Z_{i_{p+1}\ldots i_r}\in \mathcal{Z}_{r-p}$. It follows from the induction assumption that
   \[
     \Lambda\left(\frac{\p^r f}{\p x^{i_{1}} \ldots x^{i_{p-1}}} \frac{\p\psi}{\p x^{i_p}}Z_{i_{p+1}\ldots i_r}  \right)=0
 \]
 and
 \[
    \Lambda\left(\frac{\p^r f}{\p x^{i_{1}} \ldots x^{i_{p-1}}} \frac{\p}{\p x^{i_p}} Z_{i_{p+1}\ldots i_r}  \right)=0.
    \]
  Now (\ref{E:lproduct}) implies that
    \[
       \Lambda\left(\frac{\p^r f}{\p x^{i_{1}} \ldots x^{i_{p}}} Z_{i_{p+1}\ldots i_r}  \right)=0,
    \]
  which proves the claim. Thus, for any indices $i_1, \ldots, i_r$ we have that
  \begin{equation}\label{E:zero}
       \Lambda\left(\frac{\p^r f}{\p x^{i_{1}} \ldots x^{i_{r}}}  \right)=0.
 \end{equation}
Suppose that $f = \nu^s f_s + \ldots$ and $f_s$ has a nontrivial full jet at $\x$. Since $\Lambda_0 = \alpha\delta_{\x}$ with $\alpha \neq 0$,  the leading term of (\ref{E:zero}) is
\[
     \nu^s\alpha \frac{\p^r f_s}{\p x^{i_{1}} \ldots x^{i_{r}}} (\x)=0.
\]
Therefore, the full jet of $f_s$ at $\x$ is zero. This contradiction implies the statement of the Proposition.
\end{proof}
\begin{corollary}\label{C:nondeg}
   If two pairs $(\varphi, \rho)$ and $(\hat\varphi,\rho)$  are associated with the same FOI $\Lambda$ at $\x$ on a manifold $M$, then the full jet of $\varphi-\hat\varphi$ at $\x$ is a formal constant.
\end{corollary}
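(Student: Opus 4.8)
The plan is to reduce the statement to the nondegeneracy established in Proposition~\ref{P:nondeg}. Write $w = \varphi - \hat\varphi$. Since both pairs $(\varphi,\rho)$ and $(\hat\varphi,\rho)$ are associated with the same FOI $\Lambda$ and share the \emph{same} volume form $\rho$, I would write down axiom (\ref{E:axiom}) once for $(\varphi,\rho)$ and once for $(\hat\varphi,\rho)$ and subtract. The crucial point is that the term $\mathrm{div}_\rho v$ is identical in both equations, because it depends only on $\rho$, so it cancels, leaving
\[
\Lambda\big((v w)\, f\big) = 0
\]
for every vector field $v$ and every function $f$ on $M$. This is exactly where the hypothesis that the two pairs carry the same $\rho$ is used; with different volume forms the divergence contributions would not cancel and the argument would break down.

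Next I would pass to a local coordinate chart $U$ around $\x$ and specialize to the coordinate vector fields $v = \p/\p x^i$. The displayed identity then becomes
\[
\Lambda\left(\frac{\p w}{\p x^i}\, f\right) = 0
\]
for all indices $i$ and all formal functions $f$ on $U$. By the nondegeneracy of the pairing $f, g \mapsto \Lambda(fg)$ on the space of full jets at $\x$ (Proposition~\ref{P:nondeg}), each factor $\p w/\p x^i$ must have vanishing full jet at $\x$.

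Finally I would translate this into a statement about $w$ itself. Vanishing of the full jet of $\p w/\p x^i$ means $\p^{\beta + e_i} w(\x) = 0$ for every multi-index $\beta$ and every $i$, and as $i$ and $\beta$ vary these exhaust all partial derivatives of $w$ of order at least one. Hence the only surviving component of the full jet of $w$ is the value $w(\x) \in \C((\nu))$, so the full jet of $\varphi - \hat\varphi$ at $\x$ equals the formal constant $w(\x)$, as claimed. Note that the conclusion is coordinate independent even though the middle step is carried out in a chart.

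I expect the only genuinely substantive point to be the cancellation of the divergence term and the correct bookkeeping of which quantities depend on $\rho$ as opposed to $\varphi$; once the identity $\Lambda\big((vw)f\big)=0$ is in hand, the remainder is a direct invocation of Proposition~\ref{P:nondeg} followed by a routine jet computation.
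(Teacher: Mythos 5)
Your proposal is correct and follows essentially the same route as the paper: subtract the two instances of axiom (\ref{E:axiom}) to get $\Lambda\big(v(\varphi-\hat\varphi)\cdot f\big)=0$, invoke Proposition~\ref{P:nondeg} to kill the full jet of $v(\varphi-\hat\varphi)$ at $\x$, and conclude that the jet of the difference is a formal constant. The extra detail you supply (the explicit cancellation of $\mathrm{div}_\rho v$ and the coordinate bookkeeping) is exactly what the paper leaves implicit.
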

\begin{proof}
We have from (\ref{E:axiom}) that for any formal function $f$ and any vector field $v$ on $M$,
\[
     \Lambda (v(\varphi-\hat\varphi) \cdot f)=0.
\]
Proposition \ref{P:nondeg} implies that the full jet of $v(\varphi-\hat\varphi)$ at $\x$ is zero for any~$v$. Therefore, the full jets of the phase functions $\varphi$ and $\hat\varphi$ at $\x$ differ by an additive formal constant, whence the Corollary follows.
\end{proof}

Thus, if $\Lambda$ is a FOI at $\x$ associated with a pair $(\varphi,\rho)$ and $\rho$ is fixed, then the full jet of $\varphi$ at $\x$ is determined uniquely up to an additive formal constant. In the next section we add another constraint to the definition of a FOI.  It allows to fix the choice of the full jet of $\varphi$ at $\x$ up to an additive constant from~$\C$.

\section{Differentiation of a FOI with respect to the formal parameter}

Let $U$ be a coordinate chart on a manifold $M$ of dimension $n$ with local coordinates $\{x^i\}$, $\varphi = \nu^{-1}\varphi_{-1} + \varphi_0 + \ldots$ be a formal phase function on $U$ such that $\varphi_{-1}$ has a nondegenerate critical point $\x \in U$, and 
\[
\rho = e^u dx^1 \wedge \ldots \wedge dx^n
\]
be a formal volume form on $U$ for some $u \in C^\infty(U)[[\nu]]$.  Assume that  $\Lambda = \Lambda_0 + \nu\Lambda_1+\ldots$ is a FOI at $\x$ associated with the pair $(\varphi,\rho)$. Then $\Lambda_0=\alpha \delta_{\x}$ for some $\alpha \neq 0$.
We have that
\[
      \mathrm{div}_\rho \frac{\p}{\p x^i} = \frac{\p u}{\p x^i}
\]
and (\ref{E:axiom}) is equivalent to the condition
\begin{equation}\label{E:axiomrho}
    \Lambda\left(\frac{\p f}{\p x^i} + \frac{\p (\varphi + u)}{\p x^i}f\right) = 0
\end{equation}
for all $i = 1, \ldots, n$ and $f \in C^\infty(M)$. Using that $\Lambda_0(f)=\alpha f(\x)$, we extract the component of (\ref{E:axiomrho}) at the zeroth degree of $\nu$, arriving at the equality
\begin{equation}\label{E:reccur}
   \Lambda_1\left(\frac{\p \varphi_{-1}}{\p x^i}f\right) = - \alpha\left(\frac{\p f}{\p x^i} + \frac{\p (\varphi_0 + u_0)}{\p x^i} f\right)\Big |_{x=\x}.
\end{equation}
We will be looking for $\Lambda_1$ in the form
\[
    \Lambda_1(f) = \left(\frac{1}{2} A^{kl} \frac{\p^2}{\p x^k \p x^l} + B^k \frac{\p}{\p x^k} + C\right)f \Big |_{x=\x},
\]
where $A^{kl},B^k,$ and $C$ are constants and the matrix $(A^{kl})$ is symmetric. Denote the Hessian matrix of $\varphi_{-1}$ at $\x$ by $(h_{kl})$,
\[
    h_{kl} := \frac{\p^2 \varphi_{-1}}{\p x^k \p x^l}\Big  |_{x=\x},
\]
and denote by $(h^{kl})$ the inverse matrix. Then, taking into account that $\frac{\p \varphi_{-1}}{\p x^i}(\x)=0$ for all $i$, we get from (\ref{E:reccur}) that
\begin{eqnarray*}
  \left (A^{kl} h_{ki} \frac{\p f}{\p x^l} + B^k h_{ki} f \right) \Big |_{x=\x} =  - \alpha\left(\frac{\p f}{\p x^i} + \frac{\p (\varphi_0 + u_0)}{\p x^i} f\right)\Big |_{x=\x}.
\end{eqnarray*}
Thus,
\[
      A^{kl} = -\alpha h^{kl} ,B^k = - \alpha h^{ki}\frac{\p (\varphi_0 + u_0)}{\p x^i},
\]
and the constant $C$ can be arbitrary. We have proved the following lemma.
\begin{lemma}\label{L:Lambda1}
The component $\Lambda_1$ of the formal oscillatory integral $\Lambda$ is given by the formula
\[
    \Lambda_1(f) = -\alpha \left(\frac{1}{2} h^{kl} \frac{\p^2}{\p x^k \p x^l} + h^{ki}\frac{\p (\varphi_0 + u_0)}{\p x^i} \frac{\p}{\p x^k}+K\right)f \Big |_{x=\x},
\]
where $K$ is some constant.
\end{lemma}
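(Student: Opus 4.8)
The plan is to pin down $\Lambda_1$ directly from the recursion (\ref{E:reccur}), which is already in hand. Because $\Lambda_1$ is a formal distribution supported at the single point $\x$, it is a finite-order differential operator evaluated at $\x$; since the right-hand side of (\ref{E:reccur}) is of first order in $f$, I would posit that $\Lambda_1$ has order at most two and write the ansatz
\[
\Lambda_1(f) = \left(\tfrac{1}{2} A^{kl}\frac{\p^2}{\p x^k\p x^l} + B^k\frac{\p}{\p x^k} + C\right)f\,\Big|_{x=\x}
\]
with $A^{kl}$ symmetric and $A^{kl}, B^k, C$ constants to be determined. The task then reduces to solving the linear system obtained by imposing (\ref{E:reccur}) for every index $i$ and every $f$.

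First I would substitute $g = \frac{\p\varphi_{-1}}{\p x^i}\,f$ into the ansatz and evaluate at $\x$. Here the two structural facts about the phase do the work: $\frac{\p\varphi_{-1}}{\p x^i}(\x)=0$ because $\x$ is critical, and $\frac{\p^2\varphi_{-1}}{\p x^k\p x^i}(\x)=h_{ki}$ by definition of the Hessian. Expanding $\frac{\p}{\p x^l}g$ and $\frac{\p^2}{\p x^k\p x^l}g$ by the Leibniz rule and discarding every term still carrying an undifferentiated factor $\frac{\p\varphi_{-1}}{\p x^i}$ (which vanishes at $\x$), one collects $\Lambda_1(g)$ as a combination of $\frac{\p f}{\p x^l}(\x)$ and $f(\x)$; using the symmetry of $A^{kl}$, the coefficient of $\frac{\p f}{\p x^l}(\x)$ is $A^{kl}h_{ki}$.

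I would then match this against the right-hand side $-\alpha\big(\frac{\p f}{\p x^i} + \frac{\p(\varphi_0+u_0)}{\p x^i} f\big)(\x)$, separately equating the coefficients of the first derivatives of $f$ and of $f$ itself, since these must agree for all $f$. The first-derivative comparison gives $A^{kl}h_{ki} = -\alpha\delta^l_i$, which upon contraction with the inverse Hessian yields $A^{kl} = -\alpha h^{kl}$. Feeding this back, the comparison of the $f(\x)$ coefficients determines $B^k$; contracting with $h^{ki}$ produces the stated drift term $B^k = -\alpha h^{ki}\frac{\p(\varphi_0+u_0)}{\p x^i}$. Finally, because every function of the form $\frac{\p\varphi_{-1}}{\p x^i} f$ vanishes at $\x$, the constant term $C$ drops out of (\ref{E:reccur}) entirely, so it is unconstrained; writing $C = -\alpha K$ gives the free parameter $K$ of the statement.

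The main obstacle I anticipate is not the algebra of inverting the Hessian but justifying the ansatz and executing the second-order Leibniz expansion cleanly: one must argue that an operator of order at most two actually suffices (no higher derivatives are forced by the recursion) and that the resulting over-determined system—one vector equation per index $i$, required for all $f$—is consistent, which hinges on the symmetry of the Hessian. The delicate bookkeeping is in $\frac{\p^2}{\p x^k\p x^l}\big(\frac{\p\varphi_{-1}}{\p x^i} f\big)$ at $\x$, where one must track precisely which product-rule terms survive once the undifferentiated factor is killed by the critical-point condition; it is here that the Hessian entries must enter with the correct weights.
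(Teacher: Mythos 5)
Your proposal is correct and follows essentially the same route as the paper: the same second-order ansatz with constants $A^{kl}, B^k, C$, the same substitution of $\frac{\p\varphi_{-1}}{\p x^i}f$ into the recursion (\ref{E:reccur}), and the same matching of the coefficients of $\frac{\p f}{\p x^l}(\x)$ and $f(\x)$ to obtain $A^{kl}=-\alpha h^{kl}$, $B^k=-\alpha h^{ki}\frac{\p(\varphi_0+u_0)}{\p x^i}$, with $C$ unconstrained.
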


Now we will additionally assume that the phase function~$\varphi_{-1}$ has zero critical value at $\x$, $\varphi_{-1}(\x)=0$. 

\begin{proposition}\label{P:diffnu}
   Let $\Lambda$ be a FOI at $\x$ associated with the pair $(\varphi,\rho)$. Then the functional
   \begin{equation}\label{E:tildeLambda}
       \tilde\Lambda(f) =  - \frac{2\nu}{n}\left(\frac{d}{d\nu}\Lambda(f) - \Lambda \left(\frac{df}{d\nu} + \frac{d(\varphi + u)}{d\nu}f \right)\right)
   \end{equation}
is another FOI associated with the pair $(\varphi,\rho)$. Moreover, $\tilde\Lambda_0 = \Lambda_0$.
\end{proposition}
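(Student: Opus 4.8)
The plan is to read (\ref{E:tildeLambda}) as the formal $\nu$-derivative of the oscillatory integral $\nu^{-n/2}\int_{(\x)}e^\varphi f\,\rho$. Writing $\Phi:=\varphi+u$, so that in the chart $U$ the kernel is $\nu^{-n/2}e^\Phi f\,dx$, differentiation in $\nu$ gives formally $\frac{d}{d\nu}\Lambda(f) = -\frac{n}{2\nu}\Lambda(f) + \Lambda(\frac{df}{d\nu} + \frac{d\Phi}{d\nu}f)$, so the parenthesized combination in (\ref{E:tildeLambda}) equals $-\frac{n}{2\nu}\Lambda(f)$ and $\tilde\Lambda$ ought to reproduce $\Lambda$. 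The proposition makes this precise at the axiomatic level, and I would prove it in two logically independent parts: (i) $\tilde\Lambda$ satisfies the integration-by-parts axiom (\ref{E:axiom}); (ii) $\tilde\Lambda$ is a genuine formal distribution supported at $\x$ with $\tilde\Lambda_0=\Lambda_0$.

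For (i) I would use the coordinate axiom (\ref{E:axiomrho}), which for $\Lambda$ reads $\Lambda(\frac{\p h}{\p x^i} + \frac{\p\Phi}{\p x^i}h)=0$ for all $i$ and all formal $h$; by the equivalence of (\ref{E:axiom}) and (\ref{E:axiomrho}) it suffices to verify the same identity for $\tilde\Lambda$. Fix $f$ and put $g_i:=\frac{\p f}{\p x^i} + \frac{\p\Phi}{\p x^i}f$. Then $\Lambda(g_i)=0$, hence $\frac{d}{d\nu}\Lambda(g_i)=0$, and (\ref{E:tildeLambda}) collapses to $\tilde\Lambda(g_i)=\frac{2\nu}{n}\Lambda(\frac{dg_i}{d\nu}+\frac{d\Phi}{d\nu}g_i)$. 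Expanding $\frac{dg_i}{d\nu}+\frac{d\Phi}{d\nu}g_i$ and regrouping by the product rule, using $\frac{d}{d\nu}\frac{\p\Phi}{\p x^i}=\frac{\p}{\p x^i}\frac{d\Phi}{d\nu}$, I expect the expression to split exactly into the two blocks $\frac{\p h_1}{\p x^i}+\frac{\p\Phi}{\p x^i}h_1$ with $h_1=\frac{df}{d\nu}$ and $\frac{\p h_2}{\p x^i}+\frac{\p\Phi}{\p x^i}h_2$ with $h_2=\frac{d\Phi}{d\nu}f$. Each block is annihilated by $\Lambda$ through (\ref{E:axiomrho}), so $\tilde\Lambda(g_i)=0$. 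This part is a clean formal telescoping and uses neither $\varphi_{-1}(\x)=0$ nor the dimension.

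Part (ii) is where the substance lies and is the step I expect to be the main obstacle. Negative powers of $\nu$ in $\tilde\Lambda$ can only enter through $\Lambda(\frac{d\Phi}{d\nu}f)$, because $\frac{d\Phi}{d\nu}$ contains the term $-\nu^{-2}\varphi_{-1}$ (and no $\nu^{-1}$ term). Testing against a $\nu$-independent $f$, the $\nu^{-2}$ coefficient is $\Lambda_0(-\varphi_{-1}f)=-\alpha\varphi_{-1}(\x)f(\x)$, which vanishes exactly because of the zero-critical-value hypothesis $\varphi_{-1}(\x)=0$; this is the only place that assumption enters. The $\nu^{-1}$ coefficient is $\Lambda_1(-\varphi_{-1}f)$, which I would evaluate from Lemma \ref{L:Lambda1}: since $\varphi_{-1}$ and its first derivatives vanish at $\x$ while its Hessian there is $(h_{kl})$, only the second-order part of $\Lambda_1$ contributes, giving $-\alpha\cdot\frac{1}{2}h^{kl}(-h_{kl})f(\x)=\frac{n}{2}\alpha f(\x)$ via $h^{kl}h_{kl}=n$.

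Finally the factor $-\frac{2\nu}{n}$ raises the (vanishing) $\nu^{-2}$ contribution to a vanishing $\nu^{-1}$ term, so $\tilde\Lambda$ indeed starts in degree $\nu^0$, and it turns the $\nu^{-1}$ coefficient $-\frac{n}{2}\alpha f(\x)$ into $\tilde\Lambda_0(f)=\alpha f(\x)$, i.e. $\tilde\Lambda_0=\alpha\delta_{\x}=\Lambda_0$; in particular $\tilde\Lambda_0\neq 0$. Since $\Lambda$ is supported at $\x$, so are all coefficients of $\tilde\Lambda$, and together with part (i) this exhibits $\tilde\Lambda$ as a FOI associated with $(\varphi,\rho)$ satisfying $\tilde\Lambda_0=\Lambda_0$. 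The delicate point throughout is the bookkeeping in powers of $\nu$: one must track that $d/d\nu$ lowers degree, that the single dangerous $\nu^{-2}$ term is precisely the one killed by $\varphi_{-1}(\x)=0$, and that the normalizing constant $-2/n$ is exactly the value for which the surviving $\nu^{-1}$ contribution reproduces $\Lambda_0$.
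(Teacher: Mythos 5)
Your argument follows the paper's proof essentially step for step. Part (i) is the paper's observation that the operators $\frac{\p}{\p x^i}+\frac{\p(\varphi+u)}{\p x^i}$ and $\frac{d}{d\nu}+\frac{d(\varphi+u)}{d\nu}$ commute --- your explicit regrouping into the two blocks $h_1=\frac{df}{d\nu}$ and $h_2=\frac{d(\varphi+u)}{d\nu}f$ is exactly that commutation written out --- and part (ii) reproduces the paper's computation of the $\nu$-degree $-1$ and $0$ components via Lemma \ref{L:Lambda1}, with the correct identification of where $\varphi_{-1}(\x)=0$ and the normalizing factor $-2/n$ enter; all signs check out.

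The one step you omit, which the paper carries out first, is the verification that $\tilde\Lambda$ is $\C((\nu))$-linear. Since $\frac{d}{d\nu}$ is not $\nu$-linear, it is not automatic that (\ref{E:tildeLambda}) defines a formal distribution: one must check $\tilde\Lambda(\nu f)=\nu\tilde\Lambda(f)$, which holds because the two extra terms produced by the product rule --- the $\Lambda(f)$ coming from $\frac{d}{d\nu}\Lambda(\nu f)$ and the $\Lambda(f)$ coming from $\Lambda\bigl(\frac{d(\nu f)}{d\nu}\bigr)$ --- cancel. Without this, the expansion $\tilde\Lambda=\tilde\Lambda_0+\nu\tilde\Lambda_1+\cdots$ into distributions and your determination of $\tilde\Lambda_0$ by testing only on $\nu$-independent $f$ are not yet justified. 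The check is one line, but it is a genuine part of showing that $\tilde\Lambda$ is a FOI, so it should be stated.
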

\begin{proof}
We assume that $\Lambda_0 = \alpha \delta_{\x}$, where $\alpha$ is a nonzero constant. First we check that $\tilde\Lambda$ is $\nu$-linear. We have
  \begin{eqnarray*}
      \tilde\Lambda(\nu f) =   - \frac{2\nu}{n}\left(\frac{d}{d\nu}\Lambda(\nu f) - \Lambda \left(\left(\frac{d}{d\nu} + \frac{d(\varphi + u)}{d\nu}\right)(\nu f) \right)\right)=\hskip 2cm \\
        - \frac{2\nu}{n}\left(\Lambda(f) + \nu \frac{d}{d\nu}\Lambda(f) - \Lambda(f) - \nu \Lambda \left(\frac{d f}{d\nu} + \frac{d(\varphi + u)}{d\nu}f \right)\right) = \nu \tilde\Lambda(f).
  \end{eqnarray*}
  Hence, $\tilde\Lambda$ is a formal distribution supported at $\x$. Then we cal\-culate the leading term of $\tilde\Lambda(f)$  using Lemma \ref{L:Lambda1} and the assumption that $\varphi_{-1}(\x)=0$, assuming that $f$ does not depend on~$\nu$. The $\nu$-fil\-tration degree of $\tilde\Lambda(f)$ is at least $-1$. The $\nu$-filtration degree of each of the terms 
  \[
       - \frac{2\nu}{n}\left(\frac{d}{d\nu}\Lambda(f)\right) \mbox{ and } \frac{2\nu}{n}\Lambda\left(\frac{du}{d\nu}f\right)
  \]
  is at least one. The component of $\tilde\Lambda(f)$ of $\nu$-degree $-1$ is
  \[
        \frac{2\nu}{n}\Lambda_0 \left(-\frac{1}{\nu^2} \varphi_{-1} f \right) = 0,
         \]
because $\Lambda_0(\varphi_{-1}f) = \alpha\varphi_{-1}(\x)f(\x)=0$.   The component of $\tilde\Lambda(f)$ of $\nu$-degree zero is
   \begin{eqnarray*}
       \frac{2\nu^2}{n}\Lambda_1 \left(-\frac{1}{\nu^2} \varphi_{-1} f \right) = - \frac{2}{n}\Lambda_1 \left(\varphi_{-1} f \right) =\hskip 3cm\\
       \frac{2\alpha}{n}\left(\frac{1}{2} h^{kl} \frac{\p^2}{\p x^k \p x^l} + h^{ki}\frac{\p (\varphi_0 + u)}{\p x^i} \frac{\p}{\p x^k}+K\right)\left(\varphi_{-1} f \right)\Big |_{x=\x}=\\
       \frac{\alpha}{n} h^{kl} h_{kl} f(\x) = \alpha f(\x),
   \end{eqnarray*}
  because $\varphi_{-1}$ has zero of order two at $\x$. We see that the leading term of $\tilde\Lambda$ is $\tilde\Lambda_0 = \Lambda_0 = \alpha \delta_{\x}$.
  
  It remains to show that $\tilde\Lambda$ satisfies (\ref{E:axiomrho}). The operators
  \[
     \frac{\p}{\p x^i} + \frac{\p (\varphi + u)}{\p x^i}\mbox{ and } \frac{d}{d\nu} + \frac{d(\varphi + u)}{d\nu}  
  \]
commute. We obtain from (\ref{E:tildeLambda}) and (\ref{E:axiomrho}) that
   \begin{eqnarray*}
   \tilde \Lambda\left(\left(\frac{\p}{\p x^i} + \frac{\p (\varphi + u)}{\p x^i}\right)f\right)=\hskip 2.5cm\\
    - \Lambda\left(\left(\frac{d}{d\nu} + \frac{d(\varphi + u)}{d\nu}\right)\left(\frac{\p}{\p x^i} + \frac{\p (\varphi + u)}{\p x^i}\right)f \right)=\\
   - \Lambda\left(\left(\frac{\p}{\p x^i} + \frac{\p (\varphi + u)}{\p x^i}\right)\left(\frac{d}{d\nu} + \frac{d(\varphi + u)}{d\nu}\right)f \right)=0.
    \end{eqnarray*}
    We have verified all the claims of the Proposition.
\end{proof} 

It is important to notice that equation (\ref{E:tildeLambda}) does not depend on the choice of local coordinates. If we change the coordinates, the function $u$ will be modified by adding the logarithm of the Jacobian of the coordinate change (which does not depend on $\nu$), which will not change the derivative $du/d\nu$ in  (\ref{E:tildeLambda}).

  Let $\Lambda$ and $\tilde\Lambda$ be as in Proposition \ref{P:diffnu}. We see that there is a formal constant $a(\nu) = 1 + \nu a_1 + \ldots$ such that $\tilde\Lambda= a(\nu) \Lambda$. We get from formula (\ref{E:tildeLambda}) that
 \begin{equation}\label{E:stronga}
      \frac{d}{d\nu}\Lambda(f) - \Lambda \left(\frac{df}{d\nu} + \left(\frac{d(\varphi + u)}{d\nu}- \frac{n}{2\nu}a(\nu)\right)f \right)=0.
  \end{equation}
  It follows from (\ref{E:stronga}) that there exists a unique formal constant $b(\nu) = \nu b_1 + \ldots$ such that the following equation holds,
  \begin{equation}\label{E:strong}
      \frac{d}{d\nu}\Lambda(f) - \Lambda \left(\frac{df}{d\nu} + \left(\frac{d(\varphi + b(\nu) + u)}{d\nu}- \frac{n}{2\nu}\right)f \right)=0.
 \end{equation}
The FOI $\Lambda$ is associated with the pair $(\varphi + b(\nu),\rho)$ as well. It follows from (\ref{E:strong}) that if we replace $\varphi$ in (\ref{E:stronga}) with $\varphi+ b(\nu)$, then $a(\nu)$ will be replaced with the unit constant and we will have that $\tilde\Lambda=\Lambda$.

{\it We will say that a pair $(\varphi, \rho)$ associated with a FOI $\Lambda$ is strongly associated with it if $\tilde\Lambda=\Lambda$ in Proposition \ref{P:diffnu} or, equivalently, if equation (\ref{E:stronga}) holds with $a=1$.}

Assume that this is the case. There exists a unique FOI $\hat\Lambda$ associated with the pair $(\varphi,\rho)$ such that $\hat\Lambda(1)=1$. Then $\Lambda = \tilde\Lambda= c(\nu)\hat\Lambda$  for some formal constant $c(\nu) = c_0 + \nu c_1 + \ldots$ with $c_0 \neq 0$. Setting $\Lambda= c(\nu)\hat\Lambda$ and $a=1$ in (\ref{E:stronga}), we get that
\[
     \frac{c'(\nu)}{c(\nu)} \hat\Lambda(f) = - \frac{d}{d \nu}\hat\Lambda(f) + \hat\Lambda \left(\frac{df}{d\nu} + \left(\frac{d(\varphi + u)}{d\nu}- \frac{n}{2\nu}\right)f \right).
\]
 Setting $f=1$, we obtain that
  \[
      \frac{c'(\nu)}{c(\nu)} = \hat\Lambda\left(\frac{d(\varphi + u)}{d\nu}\right) - \frac{n}{2\nu}.
  \]
  Thus, $c(\nu)$ and therefore the FOI $\Lambda$ itself are determined up to a nonzero multiplicative constant from $\C$. On the other hand, if $\Lambda$ satisfies (\ref{E:strong}), then $\beta \Lambda$ satisfies (\ref{E:strong}) for any nonzero $\beta \in \C$. 
  
If $\Lambda$ is also strongly associated with another pair $(\hat\varphi,\rho)$, it follows from equation (\ref{E:stronga}) with $a=1$ that
\[
     \Lambda\left(\frac{d(\varphi-\hat\varphi)}{d\nu}f\right)=0
\]
for all $f$. We see from Proposition \ref{P:nondeg} that the full jet of
\[
     \frac{d(\varphi-\hat\varphi)}{d\nu}
\]
at $\x$ is zero. Using Corollary \ref{C:nondeg}, we obtain that the full jet of $\varphi-\hat\varphi$ at $\x$ is a constant from~$\C$. We have proved the following theorem.
  
  \begin{theorem}\label{T:strong}
  Given a formal phase function $\varphi= \nu^{-1}\varphi_{-1}+\varphi_0+\ldots$ on a manifold $M$ such that $\varphi_{-1}$ has a nondegenerate critical point $\x$ with zero critical value and a formal volume form $\rho=\rho_0+\nu\rho_1+\ldots$ such that $\rho_0$ does not vanish at $\x$, there exists a FOI $\Lambda$ at $\x$ strongly associated with the pair $(\varphi,\rho)$. It is determined up to a nonzero multiplicative constant from $\C$. In particular, there is a unique FOI $\Lambda$ strongly associated with that pair with the leading term $\Lambda_0=\delta_{\x}$. If $\Lambda$ is also strongly associated with another pair $(\hat\varphi,\rho)$,
then the full jet of $\varphi-\hat\varphi$ at $\x$ is a constant from $\C$.
\end{theorem}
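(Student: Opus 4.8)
The plan is to assemble the facts established in the running discussion above into the four assertions of the theorem. For existence, I would start from an arbitrary FOI $\Lambda$ associated with $(\varphi,\rho)$, whose existence is guaranteed by the Appendix, and then correct it into a strongly associated one. Working in a coordinate chart with $\rho = e^u\, dx^1\wedge\cdots\wedge dx^n$, Proposition \ref{P:diffnu} produces the functional $\tilde\Lambda$ of (\ref{E:tildeLambda}), which is again a FOI associated with $(\varphi,\rho)$ with the same leading term $\tilde\Lambda_0=\Lambda_0$; hence $\tilde\Lambda=a(\nu)\Lambda$ for a formal constant $a(\nu)=1+\nu a_1+\cdots$, and this is exactly equation (\ref{E:stronga}). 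The crucial point is that the leading coefficient of $a(\nu)$ is forced to equal $1$, which is precisely the normalization $\tilde\Lambda_0=\Lambda_0$ coming from the Hessian computation $\frac1n h^{kl}h_{kl}=1$ in Proposition \ref{P:diffnu} and uses the hypothesis $\varphi_{-1}(\x)=0$.

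To upgrade $\Lambda$ to a strongly associated FOI I would rescale it by a formal constant, $\Lambda\mapsto c(\nu)\Lambda$. A direct substitution into (\ref{E:stronga}) shows that $c(\nu)\Lambda$ satisfies the strong-association condition (\ref{E:stronga}) with $a=1$ if and only if $c'(\nu)/c(\nu)=\frac{n}{2\nu}(a(\nu)-1)$. Because $a(\nu)-1$ has $\nu$-filtration degree at least $1$, the right-hand side is a genuine formal power series in $\nu$ carrying no $\nu^{-1}$ term, so this equation can be solved for a formal constant $c(\nu)$ with $c_0\neq0$; the resulting $c(\nu)\Lambda$ is then strongly associated with $(\varphi,\rho)$. (Equivalently, as in the discussion preceding the theorem, one may instead absorb the anomaly into the phase by passing to $\varphi+b(\nu)$ with $b'(\nu)=\frac{n}{2\nu}(1-a(\nu))$, which is equation (\ref{E:strong}).)

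For the uniqueness up to a nonzero constant from $\C$, suppose $\Lambda$ and $\Lambda''$ are both strongly associated with $(\varphi,\rho)$. Since both are associated with the same pair, by \cite{KS} they differ by a formal constant, $\Lambda''=c(\nu)\Lambda$ with $c_0\neq0$. Substituting into (\ref{E:stronga}) with $a=1$ and using that $\Lambda$ already satisfies the same equation, every term cancels except $c'(\nu)\Lambda(f)$; since $\Lambda\neq0$ this forces $c'(\nu)=0$, so $c(\nu)$ is a nonzero constant from $\C$. Conversely, the strong-association equation is $\nu$-linear and any nonzero $\beta\in\C$ commutes with $d/d\nu$, so $\beta\Lambda$ is again strongly associated; writing $\Lambda_0=\alpha\delta_{\x}$ with $\alpha\neq0$, the single choice $\beta=1/\alpha$ yields the normalization $\Lambda_0=\delta_{\x}$.

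Finally, for the phase I would exploit the same equation once more. If $\Lambda$ is strongly associated with both $(\varphi,\rho)$ and $(\hat\varphi,\rho)$, writing (\ref{E:stronga}) with $a=1$ for each phase and subtracting gives $\Lambda\bigl(\frac{d(\varphi-\hat\varphi)}{d\nu}\,f\bigr)=0$ for every $f$. Proposition \ref{P:nondeg} then forces the full jet of $\frac{d(\varphi-\hat\varphi)}{d\nu}$ at $\x$ to vanish, so the jet of $\varphi-\hat\varphi$ is independent of $\nu$; combined with Corollary \ref{C:nondeg}, which already shows this jet to be a formal constant, the jet of $\varphi-\hat\varphi$ at $\x$ is a constant from $\C$. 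I expect the existence step to be the main obstacle, the delicate point being the verification that the anomaly $a(\nu)$ is always of the form $1+O(\nu)$, since this is what keeps the correcting equation for $c(\nu)$ (or for $b(\nu)$) inside formal power series and free of poles.
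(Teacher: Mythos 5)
Your proposal is correct and follows essentially the same route as the paper: existence by rescaling an arbitrary associated FOI by a formal constant $c(\nu)$ solving $c'/c=\tfrac{n}{2\nu}(a(\nu)-1)$ (which is exactly the paper's equation $c'(\nu)/c(\nu)=\hat\Lambda\bigl(d(\varphi+u)/d\nu\bigr)-\tfrac{n}{2\nu}$, solvable precisely because $a(\nu)=1+O(\nu)$, i.e.\ $\tilde\Lambda_0=\Lambda_0$), uniqueness up to $\C^\times$ by the same substitution forcing $c'=0$, and the final claim by combining Proposition \ref{P:nondeg} with Corollary \ref{C:nondeg}. The only cosmetic difference is that the paper also records the equivalent normalization by shifting the phase to $\varphi+b(\nu)$, which you correctly note in passing.
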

  
  \section{The leading term of the formal asymptotic expansion}
  
  Let $\psi$ be a real phase function and $\rho$ be a volume form on an oriented manifold $M$ of dimension $n$. Suppose that $\psi$ has one nondegenerate critical point $\x$ in a neighborhood $U$ of $\x$ with zero critical value, $\psi(\x)=0$, and $\rho$ does not vanish at $\x$. If an amplitude $f \in C^\infty(M)$ is supported on $U$, then the oscillatory integral
\begin{equation}\label{E:stph}
    h^{-\frac{n}{2}} \int_M e^{\frac{i}{h}\psi} f\, d\rho,
\end{equation}
where $h$ is a positive numerical parameter, has an asymptotic expansion as $h \to 0$ according to the method of stationary phase. The leading term of this expansion depends only on the Hessian of $\psi$ at $\x$, the value $f(\x)$, and the volume form $\rho(\x)$. These data are defined on the tangent space $T_\x M$ and the leading term of the expansion is given by a model oscillatory integral on $T_\x M$ with a quadratic phase function, one half of the Hessian quadratic form of $\psi$ at $\x$. Let $\{x^i\}$ be local coordinates on $U$ such that $x^i(\x)=0$ and $\{y^i\}$ be the corresponding coordinates on $T_\x M$. Suppose that $g(y)$ is a compactly supported function on $T_{\x}M$ such that $g(0)=f(\x)$ and $\rho = w(x) dx^1 \wedge \ldots \wedge dx^n$. Then the integral (\ref{E:stph}) and the model integral
\[
    h^{-\frac{n}{2}} \int e^{\frac{i}{2h}\frac{\p^2 \psi}{\p x^j \p x^k}(\x) y^j y^k} g(y)  w(\x)dy^1 \wedge \ldots \wedge dy^n
\] 
on $T_\x M$ both have asymptotic expansions as $h \to 0$ with the same leading term. The integral (\ref{E:stph}) was formalized in \cite{L}.
 
We will interpret the formal integral (\ref{E:Lerayinv}) as a specific FOI at $\x$ strongly associated with the pair $(\varphi,\rho)$. 
Denote by $\Lambda$ the unique FOI at $\x$ with the leading term $\Lambda_0=\delta_{\x}$ strongly associated with that pair. It remains to  choose a nonzero constant $\alpha \in \C$ such that $J = \alpha \Lambda$. This choice should be dictated by the leading term of the stationary phase approximation. 

Since our main applications come from Berezin's quantization formalism, we will consider complex phase  functions and use a model Gaussian integral on the tangent space at the critical point. In general, we will determine the formal integral (\ref{E:Lerayinv}) only up to a sign. We will fix that sign in the case when the Hessian of the phase function is of the Hermitian type.

First we want to give coordinate-free interpretations of the Hessian of a function and of a Gaussian integral. 

Given a complex-valued function $f$ on a manifold $M$ with a critical point $\x$, the Hessian of $f$ at $\x$ is a complex symmetric bilinear form on the tangent space $T_{\x}M$. Denote it by $\mathrm{Hess}_{\x}(f)$. It has the following coordinate free definition. If $v, w$ are vector fields on $M$, then $(v w f)(\x)$ depends only on the values of $v$ and $w$ at $\x$ and $\mathrm{Hess}_{\x}(f)$ is determined by the condition that
 \[
    \mathrm{Hess}_{\x}(f)(v(\x),w(\x)) = (v w \varphi_{-1})(\x) = (w v \varphi_{-1})(\x).
 \]

Let $(A_{ij})$ be a positive definite $n \times n$ matrix with constant coefficients. The following formula for the Gaussian integral on $\R^n$ is well known:  
\begin{equation*}
     \int e^{- \frac{1}{2} A_{ij} x^i x^j} dx^1 \ldots dx^n = \sqrt{\frac{(2\pi)^n}{\det A}}.
\end{equation*}
Below we give a coordinate-free interpretation of this formula.

Let $V$ be a real oriented $n$-dimensional vector space, $Q$ be a positive definite quadratic form, and $\tau$ be a translation invariant ``Lebesgue" volume form on $V$. The Hessian of $Q$ is a positive definite symmetric bilinear form on $V$, $\mathrm{Hess}(Q) : V \otimes V \to \R$, such that $\mathrm{Hess}(Q)(v,v) = 2 Q(v)$. Its top exterior power
\[
    \wedge^n \mathrm{Hess}(Q) : \wedge^n V \otimes \wedge^n V \to \R
\]
is also a positive definite bilinear form. Let $q: \wedge^n V \to \R$ be the associated positive definite quadratic form. There exists  a unique oriented volume form $\sqrt{q}: \wedge^n V \to \R$ whose square is $q$. This is the Riemannian volume form corresponding to the metric given by the Hessian. Then for any positive $h$ the following Gaussian integral is absolutely convergent and its value does not depend on $h$,
\begin{equation}\label{E:Gaussint}
     h^{-\frac{n}{2}}\int_V e^{-\frac{1}{h}Q} \tau = (2\pi)^{\frac{n}{2}} \frac{\tau}{\sqrt{q}}.
\end{equation}
If we replace $h$ in (\ref{E:Gaussint}) with the formal parameter $\nu$, we will get a formal integral to which we can assign the same value. If $Q$ is a complex-valued nondegenerate quadratic form, there are two complex volume forms $\pm \sqrt{q}$ and in general there is no way to single out a ``preferred" branch of the square root. Thus, we will assign a numerical value to the following formal Gaussian integral only up to a sign:
\begin{equation}\label{E:formGauss}
    \nu^{-\frac{n}{2}}\int_{\{0\}} e^{-\frac{1}{\nu}Q} \tau = \pm(2\pi)^{\frac{n}{2}} \frac{\tau}{\sqrt{q}}.
\end{equation}
We will use this assignment to interpret the formal integral (\ref{E:Lerayinv}). 

The leading term of the integral (\ref{E:Lerayinv}) can be obtained from a model Gaussian integral on the tangent space $T_\x M$. It depends only on the value $\gamma:= \varphi_0(\x)$, the volume form $\tau:=\rho_0(\x)$, and the Hessian of $\varphi_{-1}$ at $\x$.  The nondegenerate bilinear form 
 \[
     B(v,w) := -\frac{1}{2}\mathrm{Hess}_{\x}(\varphi_{-1})(v,w)
 \]
induces the quadratic form $Q(v):= B(v,v)$. Let $q: \wedge^n T_{\x}M \to \R$ be the quadratic form associated with the bilinear form $\wedge^n B$. The constant $\alpha$ such that $J = \alpha \Lambda$ should be given by the following properly interpreted model formal integral over $T_{\x}M$,
 \begin{equation}\label{E:modint}
     \nu^{-\frac{n}{2}} \int_{\{0\}} e^{- \nu^{-1}Q + \gamma} \tau.
 \end{equation} 
 To this end we use (\ref{E:formGauss}) and interpret (\ref{E:Lerayinv}) as follows,
 \[
       \nu^{-\frac{n}{2}}\int_{\{0\}} e^{\varphi} f \, \rho = \pm(2\pi)^{\frac{n}{2}} e^\gamma\frac{\tau}{\sqrt{q}}\Lambda(f).
 \]

 Now let $g_{kl}$ be a positive definite $m \times m$ Hermitian matrix with constant coefficients and $h$ be a positive numerical parameter. The following formula for the Gaussian integral of the Hermitian type on $\C^m$  is well known:
  \begin{equation}\label{E:Hermite}
       \int e^{-\frac{1}{h}g_{kl} z^k \bar z^l} \frac{1}{m!} \left(\frac{i g_{kl} dz^k \wedge d\bar z^l}{2\pi h}\right)^m = 1.
  \end{equation}
 If we replace $h$ in (\ref{E:Hermite}) with the formal parameter $\nu$,   we will arrive at the following formal assignment,
 \begin{equation}\label{E:formHermite}
      \nu^{-m}  \int_{\{0\}} e^{-\frac{1}{\nu}g_{kl} z^k \bar z^l} \frac{1}{m!} \left(\frac{i g_{kl} dz^k \wedge d\bar z^l}{2\pi}\right)^m = 1,
  \end{equation}
where we can drop the assumption that the matrix $(g_{kl})$ is positive definite. We will assume only that $(g_{kl})$ is nondegenerate.

In order to describe (\ref{E:Hermite}) and (\ref{E:formHermite}) in a coordinate-free fashion, we consider a constant pseudo-K\"ahler form $\Omega := i g_{kl} dz^k \wedge d\bar z^l$. It has a unique potential $K(z,\bar z) := g_{kl} z^k \bar z^l$ for which $\x=0$ is a nondegenerate critical point.
Then (\ref{E:formHermite}) will be written as
\begin{equation}\label{E:Hermnorm}
     \nu^{-m}  \int_{\{0\}} e^{-\frac{1}{\nu}K} \frac{1}{m!} \left(\frac{\Omega}{2\pi}\right)^m = 1.
\end{equation}
Let $M$ be a complex manifold of complex dimension $m$ and $\varphi_{-1}$ be a complex-valued phase function with a  critical point~$\x$ on $M$. 

{\it We say that the Hessian bilinear form $\mathrm{Hess}_{\x}(\varphi_{-1})$ on $T_{\x}M$ is of the Hermitian type if it is of type (1,1) with respect to the complex structure.} In coordinates it means that
\[
     \frac{\p^2 \varphi_{-1}}{\p z^k \p z^p}(\x)=0 \mbox{ and }  \frac{\p^2 \varphi_{-1}}{\p \bar z^l \p \bar z^q}(\x)=0
\]
for all $k,l,p,q$.  We call the $m \times m$ matrix with the entries
\[
       \frac{\p^2 \varphi_{-1}}{\p z^k \p \bar z^l}(\x)
\]
the Hermitian Hessian of $\varphi_{-1}$ at $\x$. Let the critical point $\x$ of $\varphi_{-1}$ be nondegenerate. Then the bilinear form $Hess_{\x}(\varphi_{-1})$ and the Hermitian Hessian of $\varphi_{-1}$ at $\x$ are nondegenerate. 

In order to interpret the formal intergal (\ref{E:Lerayinv}) in the case when the Hessian of $\varphi_{-1}$ at $\x$ is of the Hermitian type, we will use a formal Gaussian model integral of the Hermitian type on $T_\x M$. Let $U$ be a coordinate chart around $\x$ with coordinates $\{z^p,\bar z^q\}$ and $\{\zeta^p,\bar \zeta^q\}$ be the corresponding coordinates on $T_\x M$. Then
\[
      K(\zeta,\bar\zeta) := -\frac{\p^2 \varphi_{-1}}{\p z^p \p \bar z^q}(\x)\zeta^p \bar \zeta^q
\]
is a potential of the constant pseudo-K\"ahler form 
\[
     \Omega := -i (\p \bar \p  \varphi_{-1})(\x)
\]
on $T_{\x}M$. Let $\gamma = \varphi_0(\x)$ and  $\tau=\rho_0(\x)$ be as above and set $\lambda := m! (2\pi)^m(\tau/\Omega^m)$.  Then the model integral on $T_{\x}M$ will be as follows,
 \[
    \alpha= \nu^{-m} \int_{\{0\}} e^{- \nu^{-1}K + \gamma} \tau = \nu^{-m} \int_{\{0\}} e^{- \nu^{-1}K + \gamma} \lambda \frac{1}{m!} \left(\frac{\Omega}{2\pi}\right)^m = e^\gamma \lambda,
 \]
where we have used (\ref{E:Hermnorm}). Finally, we interpret (\ref{E:Lerayinv}) as
 \[
      \nu^{-m}\int_{\{\x\}} e^{\varphi} f \, \rho = \frac{e^{\varphi_0(\x)}\rho_0(\x)}{\frac{1}{m!}\left(\frac{\Omega}{2\pi}\right)^m}\Lambda(f),
 \]
 where $\Lambda$ is the unique FOI strongly associated with the pair $(\varphi,\rho)$ and such that its leading term is $\Lambda_0=\delta_\x$.

\section{Deformation quantization}

In this section we provide general facts on deformation quantization. If $M$ is a Poisson manifold with Poisson bracket $\{\cdot,\cdot\}$, deformation quantization on $M$ is given by a star product, which is an associative $\C((\nu))$-linear product $\star$ on the space $C^\infty(M)((\nu))$ expressed by a formal bidifferential operator
\begin{equation}\label{E:star}
    f \star g = fg + \sum_{r=1}^\infty \nu^r C_r(f,g)
\end{equation}
such that
\[
      C_1(f,g) - C_1(g,f) = i\{f,g\}.
\]
It is assumed that the unit constant is the unity of the star product,
\[
     f \star 1 = 1 \star f = f
\]
for all $f$. Since star products are given by formal bidifferential operators, they can be restricted to any open subset $U \subset M$. We denote by $L^\star_f$ and $R^\star_f$ the operators of the left and right star multiplication by $f$, respectively, so that
\[
    L^\star_f g = f \star g = R^\star_g f
\]
for any functions $f,g$. We usually drop the superscript $\star$ unless it leads to confusion. The associativity of the star product implies that $[L_f,R_g]=0$ for any $f,g$. 

Two star products $\star$ and $\star'$ on $(M, \{\cdot,\cdot\})$ are equivalent if there exists a formal differential operator $T = 1 + \nu T_1 + \ldots$ on $M$ such that
\[
     f \star' g = T^{-1}(Tf \star Tg).
\]
Deformation quantization was introduced in \cite{BFFLS}. Kontsevich proved in \cite{K} that star products exist on an arbitrary Poisson manifold $M$ and their equivalence classes are parametrized by the formal deformations of the Poisson structure modulo the action of the group of formal paths in the diffeomorphism group of $M$, starting at
the identity diffeomorphism. In the case when $M$ is an arbitrary symplectic manifold, Fedosov gave in \cite{F1} a geometric construction of star products in each equivalence class. 

Let $\star$ be a star product on a symplectic manifold $M$ with symplectic form $\omega_{-1}$ and of dimension $n = 2m$. There exists a formal density $\mu$ globally defined on $M$ such that
\[
    \int f \star g\,  d\mu = \int g \star f\, d\mu
\]
for any functions $f,g$ on $M$ such that $f$ or $g$ has a compact support. It is called a trace density of the star product. All trace densities of the product $\star$ on a connected manifold $M$ form a one dimensional vector space over the field $\C((\nu))$. Since a symplectic manifold is canonically oriented by the Liouville volume form $\omega_{-1}^m/m!$, the density $\mu$ is given by a formal volume form.

 A trace density can be canonically normalized. Let $U \subset M$ be a contractible neighborhood. There exists a $\C$-linear derivation of the star product $\star$ on $U$ of the form
\[
     \delta_\star = \frac{d}{d\nu} + A,
\]
where $A$ is a formal ($\C((\nu))$-linear) differential operator on $U$. It is unique up to an inner $C((\nu))$-linear derivation $\ad_\star (f) = [f, \cdot]_\star$, where $[\cdot,\cdot]_\star$ is the star commutator and $f \in C^\infty(U)((\nu))$. We call it a local $\nu$-derivation of the star product (see \cite{GR99}, where it is called a local $\nu$-Euler derivation).

A trace density $\mu$ is canonically normalized if the following two conditions are satisfied, as shown in \cite{LMP2}.
\begin{enumerate}
\item For every contractible neighborhood $U \subset M$ and a $\nu$-derivation $\delta_\star$ on $U$, the identity
\[
     \frac{d}{d\nu} \int f\, d\mu = \int \delta_\star (f)\, d\mu
\]
holds for every formal function $f$ with compact support on $U$.
\item The leading term of the formal density $\mu$ is
\[
     \frac{1}{m!}\left(\frac{\omega_{-1}}{2\pi \nu}\right)^{m}.
\]

\end{enumerate}
Condition (1) normalizes $\mu$ up to a multiplicative constant from $\C$, which is fixed by condition (2).

It is interesting to notice that this normalization closely resembles the normalization of a formal oscillatory integral considered in this paper.

\section{Deformation quantization with separation of variables}

On K\"ahler manifolds Berezin defined in \cite{Ber1} and \cite{Ber2} a quantization procedure which leads to star products with the separation of variables property (see \cite{E} and \cite{KS}). The Berezin's construction involves explicit integral formulas which have asymptotic expansions as a certain small positive parameter $h$ tends to zero. In our paper we will show that in the framework of deformation quantization with separation of variables one can produce a family of formal oscillatory integrals whose formal oscillatory kernels can be identified. Two such FOIs were already described in \cite{KS}.

Let $M$ be a pseudo-K\"ahler manifold of complex dimension $m$ with a pseudo-K\"ahler form $\omega_{-1}$. A star product $\star$ on $M$ has the property of separation of variables of the anti-Wick type if for every open subset $U \subset M$,
\begin{equation}\label{E:aWick}
     a \star f = af \mbox{ and } f \star b = bf
\end{equation}
for every function $f$, every holomorphic function $a$, and every antiholomorphic function $b$ on $U$. A star product $\star$ has the property of separation of variables of the Wick type if
\[
     b \star f = bf \mbox{ and } f \star a = af,
\]
where $f,a,$ and $b$ are as above.

It was proved in \cite{BW} and \cite{CMP1} that star products with separation of variables exist on an arbitrary pseudo-K\"ahler manifold. Let $\star$ be a star product of the anti-Wick type on $(M,\omega_{-1})$. If $a$ is a local holomorphic function and $b$ is a local antiholomorphic function, then the anti-Wick property (\ref{E:aWick}) means that
\[
      L^\star_a = a \mbox{ and } R^\star_b = b
\]
are pointwise multiplication operators.

It was shown in~\cite{CMP1} that the star products of the anti-Wick type on $(M,\omega_{-1})$ can be bijectively parameterized (not only up to equivalence) by the formal closed $(1,1)$-forms
\[
     \omega = \nu^{-1} \omega_{-1} + \omega_0 + \ldots.
\]
The star product of the anti-Wick type $\star$ parametrized by a classifying form $\omega$ can be described as follows.  Let $U \subset M$ be a contractible coordinate chart with holomorphic coordinates $\{z^k, \bar z^l\}$. There exists a formal potential
\[
     \Phi = \nu^{-1} \Phi_{-1} + \Phi_0 + \ldots
\]
of $\omega$ on $U$, so that $\omega = i\p \bar\p \Phi$. Then the following property holds,
\[
     L_{\frac{\p \Phi}{\p z^k}} = \frac{\p}{\p z^k} + \frac{\p \Phi}{\p z^k} \mbox{ and } R_{\frac{\p \Phi}{\p \bar z^l}} = \frac{\p}{\p \bar z^l} + \frac{\p \Phi}{\p \bar z^l}.
\]
For any formal function $f$ on $U$ there exists a unique formal differential operator $A$ on $U$ such that
it commutes with the operators
\[
       R_{\bar z^l} = \bar z^l \mbox{ and } R_{\frac{\p \Phi}{\p \bar z^l}} = \frac{\p}{\p \bar z^l} + \frac{\p \Phi}{\p \bar z^l}
\]
for all $l$ and satisfies the condition that $A1 = f$. This operator is the left star multiplication operator by $f$, $A=L_f$. Now, $f \star g = Ag$. Therefore, the star product~$\star$ can be recovered from the classifying form~$\omega$ on every contractible chart $U$.

There exists a formal differential operator $I = 1 + \nu I_1 + \ldots$ globally defined on $M$ such that
\[
    I(ab) = b \star a
\]
for every locally defined holomorphic function $a$ and antiholomorphic function $b$. In particular, $Ia=a, Ib=b$, and $I1=1$. It is called the formal Berezin transform of the star product~$\star$. A star product of the anti-Wick type can be recovered from its formal Berezin transform. The equivalent star product
\[
     f \star' g = I^{-1}(If \star Ig)
\]
is a star product of the Wick type on $(M,\omega_{-1})$. The opposite product 
\[
\tilde\star = (\star')^{\mathrm{opp}}
\]
is a star product of the anti-Wick type on $(M,-\omega_{-1})$. It is called the dual star product of $\star$. Its formal Berezin transform is $I^{-1}$. Denote by $\tilde\omega$ its classifying form. Let $U \subset M$ be a contractible coordinate chart. Let $  \Phi = \nu^{-1} \Phi_{-1} + \Phi_0 + \nu \Phi_1 + \ldots$ be a potential of $\omega$ on $U$. As shown in \cite{LMP2}, there exists a potential
\begin{equation}\label{E:Psipot}
       \Psi = -\nu^{-1} \Phi_{-1} + \Psi_0 + \nu\Psi_1 + \ldots
\end{equation}
of the dual form $\tilde\omega$ on $U$ such that
\begin{eqnarray}\label{E:phipsi}
      \frac{\p \Psi}{\p z^k} = - I^{-1} \left(\frac{\p \Phi}{\p z^k}\right) \mbox{ and } \frac{\p \Psi}{\p \bar z^l} = - I^{-1} \left(\frac{\p \Phi}{\p \bar z^l}\right).
\end{eqnarray}
Then
\begin{equation}\label{E:trdens}
     \nu^{-m} e^{\Phi+\Psi} dz^1 \wedge \ldots \wedge dz^m \wedge d\bar z^1 \wedge \ldots \wedge d\bar z^m
\end{equation}
is a (not normalized) trace density of the product $\star$ on $U$. It can be normalized up to a multiplicative constant from $\C$ as follows, as shown in \cite{LMP2} (see also \cite{JGP2016}). There exists a potential $\Psi$ of the dual form $\tilde\omega$ that satisfies (\ref{E:Psipot}), (\ref{E:phipsi}), and the equation
\begin{equation}\label{E:psinu}
     \frac{d \Phi}{d \nu} + I \left(\frac{d \Psi}{d \nu}\right) = \frac{m}{\nu}.
\end{equation}
The potential $\Psi$ is determined by (\ref{E:phipsi}) and (\ref{E:psinu}) up to an additive constant from $\C$. With this choice of $\Phi$ and $\Psi$ the density (\ref{E:trdens}) differs from the canonical trace density by a multiplicative constant from $\C$.

{\it Remark.} The classifying form of the star product of the Wick type~$\star'$ is $-\tilde\omega$. If $\mu$ is the canonical trace density of the star product $\star$, then it is also the canonical trace density of $\star'$. The canonical trace density of the dual star product $\tilde\star$ is $(-1)^m \mu$.

\section{Formal oscillatory integrals $K^{(l)}$}

Let $\star$ be a star product of the anti-Wick type with classifying form $\omega = \nu^{-1}\omega_{-1} + \omega_0 + \ldots$ on a pseudo-K\"ahler manifold $M$ of complex dimension $m$. Let $\x$ be a fixed point in $M$. For any $l \in \N$ we introduce a formal distribution
\begin{equation}\label{E:fnK}
     K^{(l)}(f_1, \ldots, f_l) := \left(If_1 \star \ldots \star If_l\right)(\x) = I(f_1 \star' \ldots \star' f_l)(\x)
\end{equation}
on $M^l$ supported at the point $\x^{(l)}:=(\x, \ldots,\x) \in M^l$. Observe that $K^{(l)}(1, \ldots,1)=1$. In \cite{KS} it was proved that $K^{(1)}$ and $K^{(2)}$ are FOIs associated with certain formal oscillatory kernels. We will show that, for every $l$,  $K^{(l)}$ is a FOI at $\x^{(l)}$ and give its formal oscillatory kernel.

As explained in \cite{KS}, in order to express the formal phase of the FOI $K^{(l)}$, it is convenient to use almost analytic extensions of smooth functions on a complex manifold. 

If $U$ is an open subset of a complex manifold $M$ and $Z$ is a relatively closed submanifold of $U$, a function $f \in C^\infty(U)$ is called almost analytic along $Z$ if $\bar\p f$ vanishes to infinite order at the points of $Z$. Given a function $g \in C^\infty(Z)$, a function $f$ on $U$ almost analytic along $Z$ and such that $f|_Z = g$ is called an almost analytic extension of $g$. Given a function $f(x) \in C^\infty(U)$, there exists a function $\tilde f(x,y)$ on the complex manifold $U\times \xbar{U}$ almost analytic along the diagonal and such that $\tilde f(x,x) = f(x)$. It is also called an almost analytic extension of~$f$. The full jet of $\tilde f$ at any point of the diagonal of $U\times \xbar{U}$ is completely determined by the function $f$. If $f(z, \bar z)$ is a real analytic function on $U$, then its analytic extension $f(z, \bar w)$ on a neighborhood of the diagonal of $U \times \xbar{U}$ is an almost analytic extension of $f$.

Denote by $\mu$ the canonically normalized formal trace density of the star product $\star$. Let $U \subset M$ be a contractible neighborhood of the point $\x$ and $\Phi = \nu^{-1}\Phi_{-1} + \Phi_0 + \ldots$ be a potential of $\omega$ on $U$. Consider an almost analytic extension $\tilde\Phi =  \nu^{-1}\tilde\Phi_{-1} + \tilde\Phi_0 + \ldots$ of the potential $\Phi$ on $U \times \xbar{U}$. For any $l \in \N$ we define a function $F^{(l)} = \nu^{-1} F^{(l)}_{-1} + F^{(l)}_0 + \ldots$ on~$U^l$,
\begin{eqnarray*}
    F^{(l)} (x_1, \ldots,x_l):=\tilde\Phi(x_0, x_1) + \tilde \Phi(x_1, x_2) + \ldots  +\tilde\Phi(x_l,x_0)\\
     - (\Phi(x_0) + \Phi(x_1) + \ldots + \Phi(x_l)).
\end{eqnarray*}
Its full jet at the point $\x^{(l)}\in U^l$ does not depend on the choice of the potential $\Phi$ and on the choice of its
almost analytic extension. We will prove that $K^{(l)}$ is the FOI at $\x^{(l)}$ that admits the following formal oscillatory integral representation,
\begin{equation}\label{E:klint}
     K^{(l)}(f_1, \ldots, f_l) = \int_{\left(\x^{(l)}\right)} e^{F^{(l)}} f_1 \otimes \ldots \otimes f_l\,  \mu^{\otimes l}.
\end{equation}

\begin{proposition}\label{P:nondegcp}
The point $\x^{(l)}\in M^l$ is a nondegenerate critical point of the Hermitian type of the function $F^{(l)}_{-1}$ with zero critical value. 
\end{proposition}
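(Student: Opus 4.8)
\emph{Proof plan.}
The plan is to work in local holomorphic coordinates $\{z^k,\bar z^k\}$ on $U$, with $x_0=\x$ held as a fixed anchor and $x_1,\dots,x_l$ the genuine variables on $U^l$. Writing $x_j=(z_j,\bar z_j)$, I record
\[
     F^{(l)}_{-1}=\sum_{a=0}^{l}\tilde\Phi_{-1}(x_a,x_{a+1})-\sum_{a=0}^{l}\Phi_{-1}(x_a),
\]
with indices read cyclically so that $x_{l+1}=x_0=\x$. The first observation I would make is that, since $\bar\p\tilde\Phi_{-1}$ vanishes to infinite order along the diagonal of $U\times\xbar{U}$ and the point $\x^{(l)}$ lies on every relevant diagonal, any derivative that is antiholomorphic in the first slot or holomorphic in the second slot of some $\tilde\Phi_{-1}$ contributes nothing to the jet of $F^{(l)}_{-1}$ at $\x^{(l)}$. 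Hence throughout the computation I may treat $\tilde\Phi_{-1}(x_a,x_b)$ as if it were genuinely holomorphic in the holomorphic coordinates of $x_a$ and antiholomorphic in those of $x_b$, subject only to $\tilde\Phi_{-1}(x,x)=\Phi_{-1}(x)$.

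Next I would check the critical point and the critical value. Differentiating in $z_j^k$ $(1\le j\le l)$, the variable $z_j$ enters the holomorphic (good) slot of $\tilde\Phi_{-1}(x_j,x_{j+1})$ and the argument of $\Phi_{-1}(x_j)$, while its occurrence in the second slot of $\tilde\Phi_{-1}(x_{j-1},x_j)$ is a bad derivative and drops out at $\x^{(l)}$; the surviving contributions are $\p_{z^k}\Phi_{-1}(\x)-\p_{z^k}\Phi_{-1}(\x)=0$, and the same cancellation occurs for $\bar z_j^k$. For the critical value, on the diagonal every edge term $\tilde\Phi_{-1}(\x,\x)$ and every vertex term $\Phi_{-1}(\x)$ equals $\Phi_{-1}(\x)$, and the two sums have $l+1$ terms apiece, so $F^{(l)}_{-1}(\x^{(l)})=0$.

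I would then establish the Hermitian type by computing the pure second derivatives. A holomorphic--holomorphic derivative $\p^2/\p z_i^k\p z_j^p$ is nonzero only when $i=j$, where it equals $\p^2_{z^kz^p}\Phi_{-1}(\x)-\p^2_{z^kz^p}\Phi_{-1}(\x)=0$, and the antiholomorphic--antiholomorphic derivatives vanish by the mirror argument. Thus $\mathrm{Hess}_{\x^{(l)}}(F^{(l)}_{-1})$ is of type $(1,1)$, and consequently its nondegeneracy is equivalent to that of the Hermitian Hessian $H_{(j,k),(i,q)}:=\p^2 F^{(l)}_{-1}/\p z_j^k\p\bar z_i^q(\x^{(l)})$.

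It remains to prove that $H$ is nondegenerate. Setting $g_{kq}:=\p^2_{z^k\bar z^q}\Phi_{-1}(\x)$ — the Hermitian metric, nondegenerate because $\omega_{-1}=i\p\bar\p\Phi_{-1}$ is a pseudo-K\"ahler form — the chain $x_0\to x_1\to\cdots\to x_l\to x_0$ gives
\[
     H_{(j,k),(i,q)}=g_{kq}\bigl([\,i=j+1,\ j\le l-1\,]-[\,i=j\,]\bigr).
\]
Viewed as an $l\times l$ array of $m\times m$ blocks this matrix is block upper bidiagonal, with every diagonal block equal to $-g:=(-g_{kq})$ and every superdiagonal block equal to $g$; being block upper triangular, its determinant is $\det(-g)^l=(-1)^{ml}(\det g)^l\neq0$, so $H$ is nondegenerate. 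The one point that deserves care — and, to my mind, the conceptual heart of the statement — is that holding $x_0=\x$ fixed is exactly what truncates the cyclic chain into an \emph{upper triangular} block matrix; were all $l+1$ points allowed to vary, the analogous array would be the degenerate cyclic difference matrix (a circulant with the constant mode in its kernel), and the critical point would fail to be nondegenerate.
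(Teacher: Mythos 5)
Your proposal is correct and follows essentially the same route as the paper: compute in local holomorphic coordinates, use the almost analytic property of $\tilde\Phi$ to discard the ``bad'' derivatives, verify that the pure second derivatives vanish, and identify the Hermitian Hessian as the block bidiagonal matrix with diagonal blocks $-g_{pq}$ and superdiagonal blocks $g_{pq}$, whose block-triangular structure gives nondegeneracy. The only additions are presentational: you state the good-slot/bad-slot principle up front and note that fixing $x_0$ is what breaks the otherwise degenerate cyclic structure, which is a nice observation but not needed for the argument.
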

\begin{proof}
Let $U \subset M$ be a contractible coordinate neighborhood of the point $\x$ with holomorphic coordinates $\{z^p,\bar z^q\}$, $\Phi$ be a potential of $\omega$ on $U$, and $\tilde\Phi$ be its almost analytic extension on $U \times \xbar{U}$. Since $\tilde\Phi_{-1}(\x,\x) = \Phi_{-1}(\x)$, it follows that $F^{(l)}_{-1}(\x^{(l)})=0$. Set
\begin{equation}\label{E:gpqx}
     g_{pq} = \frac{\p^2 \Phi_{-1}}{\p z^p \p\bar z^q}(\x).
\end{equation}
Denote by $\{z^p_i,\bar z^q_i\}$ the corresponding coordinates on the $i$th copy of $U$ in $U^l$. Then
\[
     \frac{\p}{\p z^p_i} \Phi_{-1}(x_a) \Big |_{x_a=x_0} = \begin{cases} \frac{\p \Phi_{-1}}{\p z^p}(\x) & \mbox{ if } i=a;\\
     0 & \mbox{ otherwise}.
     \end{cases}
\]
Also,
\[
     \frac{\p}{\p z^p_i} \tilde\Phi_{-1}(x_a,x_b) \Big |_{x_a=x_b=x_0} = \begin{cases} \frac{\p \Phi_{-1}}{\p z^p}(\x) & \mbox{ if } i=a;\\
     0 & \mbox{ otherwise}.
     \end{cases}
\]
One can prove similar formulas for the antiholomorphic derivatives of $\Phi_{-1}$ and $\tilde\Phi_{-1}$.
It follows from these formulas that
\[
      \frac{\p F^{(l)}_{-1}}{\p z^p_i}\left(\x^{(l)}\right) =0 \mbox{ and } \frac{\p F^{(l)}_{-1}}{\p \bar z^q_i} \left(\x^{(l)}\right)=0
\]
for all $i,p,q$. Therefore, $\x^{(l)}$ is a critical point of $F^{(l)}_{-1}$. We have
\[
     \frac{\p^2}{\p z^p_i \p z^s_j} \Phi_{-1}(x_a) \Big |_{x_a=x_0} = \begin{cases} \frac{\p^2 \Phi_{-1}}{\p z^p \p z^s}(\x) & \mbox{ if } i=j=a;\\
     0 & \mbox{ otherwise}.
     \end{cases}
\]
Similarly,
\[
     \frac{\p^2}{\p z^p_i \p z^s_j} \tilde\Phi_{-1}(x_a,x_b) \Big |_{x_a=x_b=x_0} = \begin{cases} \frac{\p^2 \Phi_{-1}}{\p z^p \p z^s}(\x) & \mbox{ if } i=j= a;\\
     0 & \mbox{ otherwise}.
     \end{cases}
\]
We have similar formulas for the antiholomorphic second order partial derivatives of $\Phi_{-1}$ and $\tilde\Phi_{-1}$. These formulas imply that
\[
    \frac{\p^2 F^{(l)}_{-1}}{\p z^p_i \p z^s_j} \left(\x^{(l)}\right)=  \frac{\p^2 F^{(l)}_{-1}}{\p \bar z^q_i \p \bar z^t_j}\left(\x^{(l)}\right) = 0
\]
for all $i,j,p,q,s,t$.
Therefore, the Hessian of $F^{(l)}_{-1}$ at $\x^{(l)}$ is of the Hermitian type. We have
\[
     \frac{\p^2}{\p z^p_i \p \bar z^q_j} \Phi_{-1}(x_a) \Big |_{x_a=x_0} = \begin{cases} g_{pq} & \mbox{ if } i=j=a;\\
     0 & \mbox{ otherwise}.
     \end{cases}
\]
Similarly,
\[
     \frac{\p^2}{\p z^p_i \p \bar z^q_j} \tilde\Phi_{-1}(x_a,x_b) \Big |_{x_a=x_b=x_0} = \begin{cases} g_{pq} & \mbox{ if } \mbox{ if } i=a \mbox{ and } j=b;\\
     0 & \mbox{ otherwise}.
     \end{cases}
\]
It follows from these formulas that  the Hermitian Hessian  of $F^{(l)}_{-1}$ at $\x^{(l)}$ has the entries
\begin{equation}\label{E:HermHess}
    \frac{\p^2 F^{(l)}_{-1}}{\p z^p_i \p \bar z^q_j} \left(\x^{(l)}\right) = \begin{cases} - g_{pq} & \mbox{ if } 1 \leq i=j \leq l;\\
    g_{pq} & \mbox{ if } 1 \leq i=j-1 \leq l-1;\\
    0 & \mbox{ otherwise}.
    \end{cases}
\end{equation}
Therefore it is block-triangular with the diagonal blocks $(-g_{pq})$, whence one can see that the Hessian of $F^{(l)}_{-1}$ at $\x^{(l)}$ is nondegenerate.
\end{proof}

In \cite{KS} it was proved that $K^{(1)}$ is a FOI at $\x$ associated with the pair $(F^{(1)}, \mu)$. One can show along the same lines that $K^{(l)}$ is a FOI at $\x^{(l)}$ associated with the pair
\begin{equation}\label{E:klpair}
          \left(F^{(l)}, \mu^{\otimes l}\right)
\end{equation}
for all $l$. Below we will prove that $K^{(l)}$ is strongly associated with the pair~(\ref{E:klpair}).

Given a potential $\Phi$ of the classifying form $\omega$ on a neighborhood $U \subset M$, it was shown in \cite{LMP1} that
\[
    \delta^l_\star= \frac{d}{d\nu} + \frac{d \Phi}{d\nu} - L^\star_{\frac{d \Phi}{d\nu}} \mbox{ and } \delta^r_\star =  \frac{d}{d\nu} + \frac{d \Phi}{d\nu} - R^\star_{\frac{d \Phi}{d\nu}} 
\]
are local $\nu$-derivations of the star product $\star$.
\begin{lemma}\label{L:deldel}
Given a potential $\Phi$ of the form $\omega$ on a contractible coordinate chart $U \subset M$ and a potential $\Psi$ of the dual form $\tilde\omega$ satisfying (\ref{E:Psipot}), (\ref{E:phipsi}), and (\ref{E:psinu}), the following identity holds for all $f \in C^\infty(U)((\nu))$,
\begin{equation}\label{E:deltaprime}
        \delta^r_\star I(f) = I \left(\delta^l_{\star'} f\right),
\end{equation}
where
\[
     \delta^l_{\star'} = \frac{d}{d\nu} + \frac{d\Psi}{d\nu} - L^{\star'}_{\frac{d\Psi}{d\nu}}
\]
is a local $\nu$-derivation of the star product $\star'$.
\end{lemma}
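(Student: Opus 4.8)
\noindent\emph{Proof strategy.} The plan is to prove that the operator $\Delta := \delta^r_\star\circ I - I\circ\delta^l_{\star'}$ vanishes identically, which is exactly \eqref{E:deltaprime}. First I would note that, since $\delta^r_\star$ is a $\nu$-derivation of $\star$, $\delta^l_{\star'}$ is a $\nu$-derivation of $\star'$, and $I$ is an algebra isomorphism with $I(f\star' g)=If\star Ig$, the operator $\Delta$ obeys the twisted Leibniz rule $\Delta(f\star' g)=\Delta(f)\star Ig+If\star\Delta(g)$. Writing $\frac{d}{d\nu}\circ I=I\circ\frac{d}{d\nu}+\frac{dI}{d\nu}$ shows that the two $\frac{d}{d\nu}$-contributions cancel, so $\Delta=\frac{dI}{d\nu}+(\text{differential operators in the space variables})$ is an honest $\C((\nu))$-linear formal differential operator on $U$. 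Because the coordinate functions $z^k,\bar z^l$ generate $C^\infty(U)((\nu))$ as a $\star'$-algebra, in the sense that their $\star'$-products realize all polynomial jets at $\x$, the twisted Leibniz rule propagates vanishing from generators to all $\star'$-monomials and hence forces $\Delta=0$. Thus the whole statement reduces to the two identities $\Delta(z^k)=0$ and $\Delta(\bar z^l)=0$.

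The holomorphic check is immediate. For a holomorphic $a$ the anti-Wick rule $a\star g=ag$ collapses $\delta^r_\star(a)$ to $\frac{da}{d\nu}$, while the Wick rule $g\star' a=ga$ collapses $\delta^l_{\star'}(a)$ to $\frac{da}{d\nu}$ as well; since $I$ fixes holomorphic functions, $\delta^r_\star I(a)=I\delta^l_{\star'}(a)=\frac{da}{d\nu}$ and $\Delta(a)=0$. This case uses only the separation-of-variables property \eqref{E:aWick}.

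The antiholomorphic case is the heart of the matter, and I expect it to be the main obstacle: unlike the holomorphic case it does not reduce to pointwise multiplication. For antiholomorphic $b$ I would combine three facts: the anti-Wick identity $R^\star_b=b$, which turns $\frac{d\Phi}{d\nu}\star b$ into the pointwise product $\frac{d\Phi}{d\nu}\,b$; the intertwining consequence $I(b\,h)=b\star I(h)$, which follows from $b\star' h=bh$ together with $I(f\star' g)=If\star Ig$; and the normalization \eqref{E:psinu} rewritten as $\frac{d\Psi}{d\nu}=\frac{m}{\nu}-I^{-1}\!\left(\frac{d\Phi}{d\nu}\right)$. Feeding these into $I\delta^l_{\star'}(b)=I\!\left(\frac{db}{d\nu}\right)+I\!\left(\frac{d\Psi}{d\nu}\,b\right)-I\!\left(\frac{d\Psi}{d\nu}\star' b\right)$, the middle term becomes $\frac{m}{\nu}b-b\star\frac{d\Phi}{d\nu}$ while the last becomes $\frac{m}{\nu}b-\frac{d\Phi}{d\nu}\,b$; the two $\frac{m}{\nu}b$ contributions cancel and what remains is precisely $\delta^r_\star(b)=\frac{db}{d\nu}+\frac{d\Phi}{d\nu}\,b-b\star\frac{d\Phi}{d\nu}$, so $\Delta(b)=0$. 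It is exactly here that \eqref{E:psinu} is indispensable: with a potential $\Psi$ satisfying only \eqref{E:phipsi} the two sides of \eqref{E:deltaprime} would differ by an inner derivation, and \eqref{E:psinu} is what pins down the additive constant in $\Psi$ that annihilates it. Once both generator identities hold, the twisted Leibniz rule and the density of $\star'$-monomial jets complete the proof.
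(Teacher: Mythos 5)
Your proof is correct and follows essentially the same route as the paper's: both reduce to the factorized case $f=b\star' a=ab$ via the separation-of-variables identities, the intertwining $I(f\star' g)=If\star Ig$, and \eqref{E:psinu}, and then extend by density of factorizable jets using that both sides are formal differential operators, so your twisted Leibniz rule for $\Delta$ merely repackages the paper's direct computation of $\delta^r_\star(b\star a)$. One aside is inaccurate, though it does not affect the argument: since $c-L^{\star'}_{c}=0$ for any formal constant $c$, a potential satisfying only \eqref{E:phipsi} differs from the normalized one by a formal constant and yields the \emph{same} operator $\delta^l_{\star'}$, so \eqref{E:psinu} is not what prevents an inner-derivation discrepancy --- its actual role in your (and the paper's) computation is to supply the relation $I^{-1}\bigl(\tfrac{d\Phi}{d\nu}\bigr)=\tfrac{m}{\nu}-\tfrac{d\Psi}{d\nu}$ that converts the $\Phi$-terms into $\Psi$-terms.
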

\begin{proof}
Suppose that a formal function $f \in C^\infty(U)((\nu))$ is factorized as $f = ab$, where $a$ is formal holomorphic and $b$ is formal antiholomorphic on $U$. Then
\begin{eqnarray*}
  \delta^r_\star I(f) = \delta^r_\star(b \star a)  = \delta^r_\star (b) \star a + b \star \delta^r_\star (a) =\hskip 2cm\\
  \left(\frac{d b}{d\nu} + \frac{d \Phi}{d\nu}b  - R_{\frac{d \Phi}{d\nu}}b\right) \star a + b \star \left(\frac{d a}{d\nu} + \frac{d \Phi}{d\nu}a  - R_{\frac{d \Phi}{d\nu}}a\right) =\\
   \left(\frac{d b}{d\nu} + \frac{d \Phi}{d\nu} \star b  - b \star \frac{d \Phi}{d\nu}\right) \star a + b \star \left(\frac{d a}{d\nu} + a \star \frac{d \Phi}{d\nu}  - a \star \frac{d \Phi}{d\nu}\right) =\\
   I \left(\frac{d b}{d\nu} a + I^{-1}\left(\frac{d \Phi}{d\nu}\right) \star' b \star' a- b \star ' I^{-1}\left(\frac{d \Phi}{d\nu}\right) \star' a + b \frac{da}{d\nu}\right)=\\
   I\left(\frac{df}{d\nu} + I^{-1}\left(\frac{d \Phi}{d\nu}\right) \star' f - I^{-1}\left(\frac{d \Phi}{d\nu}\right) f\right) = \\
    I\left(\frac{df}{d\nu} + \left(\frac{m}{\nu} - \frac{d \Psi}{d\nu}\right) \star' f - \left(\frac{m}{\nu}-\frac{d \Psi}{d\nu}\right) f\right) = I \left(\delta^l_{\star'} f\right).
\end{eqnarray*}
The identity (\ref{E:deltaprime}) holds for any $f \in C^\infty(U)((\nu))$ because $I$ is a formal differential operator.
\end{proof}
Given a function $F$ on $M^l$, sometimes we will express $K^{(l)}(F)$ as
\[
    K^{(l)}(F(x_1, \ldots,x_l)).
\]
\begin{lemma}\label{L:gK}
Let $\tilde g(x,y)$ be an almost analytic extension of a function $g(x)$ on $M$. Then
\begin{eqnarray*}
K^{(l)}(\tilde g(x_i,x_{i+1})f_1(x_1) \ldots f_l(x_l)) =\hskip 2cm\\
K^{(l)}(f_1, \ldots, f_i,  I^{-1}(g) \star' f_{i+1}, f_{i+2}, \ldots, f_l)
\end{eqnarray*}
for $0 \leq i \leq l-1$ and
\begin{eqnarray*}
K^{(l)}(\tilde g(x_l,x_0)f_1(x_1) \ldots f_l(x_l)) =
K^{(l)}(f_1, \ldots,  f_{l-1}, f_l \star' I^{-1}(g)).
\end{eqnarray*}
\end{lemma}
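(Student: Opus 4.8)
The plan is to reduce the identity to the case of a \emph{factorized} extension and then verify it by pure star-product algebra. Since $K^{(l)}$ is a formal distribution supported at $\x^{(l)}$, each coefficient in its $\nu$-expansion is a distribution of finite order, so $K^{(l)}(F)$ depends only on a finite jet of $F$ at $\x^{(l)}$, the order growing with the power of $\nu$. Now $\tilde g(x,y)$ is almost holomorphic in $x$ and almost antiholomorphic in $y$, i.e. $\bar\p_x\tilde g$ and $\p_y\tilde g$ vanish to infinite order along the diagonal; hence its Taylor jet at $(\x,\x)$ is a formal power series in the holomorphic coordinates of the first argument and the antiholomorphic coordinates of the second, and in particular a formal sum of products $a(x_i)b(x_{i+1})$ with $a$ holomorphic, $b$ antiholomorphic, $g=\sum ab$ on the diagonal. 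Because only finitely many such terms contribute at each order in $\nu$, and $K^{(l)}$, $I^{-1}$ and $\star'$ are all $\C((\nu))$-linear and depend only on jets, it suffices to prove the identity when $\tilde g(x,y)=a(x)b(y)$ with $g=ab$.

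For this factorized case I would invoke the Wick-type separation of variables of $\star'$ together with the intertwining property $I(f\star' g)=If\star Ig$. The inputs are: $f\star' a=af$ and $b\star' f=bf$ for holomorphic $a$ and antiholomorphic $b$; the collapse $a\star' b=I^{-1}(Ia\star Ib)=I^{-1}(ab)$, valid because $L_a=a$ for the anti-Wick product; and, for the endpoints, $I(a\star' w)=a\cdot Iw$ and $I(w\star' b)=Iw\cdot b$, which follow from the same intertwining together with $L_a=a$ and $R_b=b$. In the interior case $1\le i\le l-1$ the factor $a(x_i)b(x_{i+1})$ occupies slots $i$ and $i+1$, so $K^{(l)}$ evaluates $I(\cdots\star'(af_i)\star'(bf_{i+1})\star'\cdots)(\x)$; rewriting $af_i=f_i\star' a$ and $bf_{i+1}=b\star' f_{i+1}$ and contracting $a\star' b=I^{-1}(g)$ turns the argument into $\cdots\star' f_i\star' I^{-1}(g)\star' f_{i+1}\star'\cdots$, which by associativity of $\star'$ is exactly $K^{(l)}(\ldots,f_i,I^{-1}(g)\star' f_{i+1},\ldots)$.

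The two boundary cases require the intertwining identities precisely because one end of $\tilde g$ is pinned at $x_0=\x$. For $i=0$ the extension $\tilde g(\x,x_1)=a(\x)b(x_1)$ is antiholomorphic in $x_1$ times a constant, and since $a(\x)b\neq I^{-1}(g)$ in general, the left factor cannot simply be absorbed into a $\star'$ multiplication inside the product; instead I would pull it through $I$ via $I(a\star' w)=a\cdot Iw$ and evaluate at $\x$, matching $a(\x)\,I(b\star' h)(\x)$ with $I(I^{-1}(g)\star' h)(\x)$, where $h=f_1\star'\cdots\star' f_l$. The wraparound term $\tilde g(x_l,\x)$ is treated symmetrically using $I(w\star' b)=Iw\cdot b$. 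I expect the reduction step to be the only genuine obstacle: one must argue carefully that replacing the almost analytic $\tilde g$ by its jet-factorized form changes neither side, which rests on the jet-locality of $K^{(l)}$ and the infinite-order vanishing of $\bar\p_{x_i}\tilde g$ and $\p_{x_{i+1}}\tilde g$ on the diagonal. Once this is secured, the remaining computation is a short application of the separation-of-variables axioms and associativity.
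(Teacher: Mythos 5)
Your proposal is correct and follows essentially the same route as the paper: reduce to a factorized extension $\tilde g(x,y)=a(x)b(y)$ via the finite-jet dependence of $K^{(l)}$ at each order in $\nu$ (the paper approximates $g$ by a finite sum $\sum_k a_kb_k$ matching a sufficiently high jet at $\x$), then verify the factorized case using $f\star' a=af$, $b\star' f=bf$, the contraction $a\star' b=I^{-1}(g)$, and the intertwining $I(u\star' v)=Iu\star Iv$ for the endpoint terms. The only cosmetic difference is that for $i=0$ the paper writes $a(\x)\,I(\cdots)(\x)=(a\star I(\cdots))(\x)$ using the anti-Wick property of $\star$, which is the same identity you invoke as $I(a\star' w)=a\cdot Iw$ read in the other direction.
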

\begin{proof}
Suppose that $U$ is a coordinate chart containing $\x$. Let $g = ab$ on $U$, where $a$ is holomorphic and $b$ antiholomorphic. Then $\tilde g(x,y) := a(x)b(y)$ is the holomorphic extension of $g$. For $i=0$,
\begin{eqnarray*}
K^{(l)}(\tilde g(x_0,x_1)f_1(x_1) \ldots f_l(x_l)) =K^{(l)}(a(x_0)b(x_1)f_1(x_1) \ldots f_l(x_l)) =\\
a(x_0)( I(b \star' f_1 \star' \ldots \star' f_l))(\x)= (a \star I(b \star' f_1 \star' \ldots \star' f_l))(\x)=\\
I(a \star' b \star' f_1 \star' \ldots \star' f_l))(\x)= I\left((I^{-1}(ab)\star' f_1) \star'  f_2 \star' \ldots \star' f_l\right)(\x)=\\
I\left((I^{-1}(g) \star' f_1) \star'  f_2 \star' \ldots \star' f_l\right)(\x) = K^{(l)}\left(I^{-1}(g) \star' f_1, f_2, \ldots, f_l\right).
\end{eqnarray*}
For $1 \leq i \leq l-1$,
\begin{eqnarray*}
  K^{(l)}(\tilde g(x_i,x_{i+1})f_1(x_1) \ldots f_l(x_l)) =\hskip 3cm \\
  K^{(l)} (f_1(x_1) \ldots (f_i(x_i)a(x_i) )(b(x_{i+1}) f_{i+1}(x_{i+1})) \ldots f_l(x_l)) =\\
  I(f_1 \star' \ldots \star' f_i \star' (a \star' b \star' f_{i+1} )\star' \ldots \star' f_l )(\x)=\\
    I\left(f_1 \star' \ldots \star' f_i \star' (I^{-1}(ab) \star' f_{i+1} )\star' \ldots \star' f_l \right)(\x)=\\
     I\left(f_1 \star' \ldots \star' f_i \star' (I^{-1}(g) \star' f_{i+1} )\star' \ldots \star' f_l \right)(\x)=\\
     K^{(l)}(f_1, \ldots, f_i,  I^{-1}(g) \star' f_{i+1}, \ldots, f_l).
\end{eqnarray*}
We have shown that the first equality of the lemma holds when $g=ab$. For any integer $N \geq 0$, the first equality modulo $\nu^N$ depends on the jet of $g$ of finite order $K$  at $\x$. We can approximate $g$ by a finite sum $\sum_{k} a_kb_k$, where $a_k$ are holomorphic and $b_k$ are antiholomorphic so that $g$ and $\sum_{k} a_kb_k$ will have the same jet of order $K$ at $\x$. It follows that the first equality holds modulo $\nu^N$ for any $N$ and therefore is true. The proof of the second equality of the lemma is similar to the proof of the first one when $i=0$.
\end{proof}
\begin{proposition}\label{P:klstrong}
  The FOI $K^{(l)}$ at $\x^{(l)}$ is strongly associated with the pair~(\ref{E:klpair}).
\end{proposition}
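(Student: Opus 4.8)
The plan is to verify directly that the pair (\ref{E:klpair}) satisfies the defining equation of strong association, i.e.\ equation (\ref{E:stronga}) with $a=1$, for the FOI $K^{(l)}$. Since $M^l$ has real dimension $n=2ml$, the term $n/(2\nu)$ equals $ml/\nu$. Working on a contractible chart $U$ around $\x$ with holomorphic coordinates, I would fix potentials $\Phi$ of $\omega$ and $\Psi$ of $\tilde\omega$ satisfying (\ref{E:Psipot}), (\ref{E:phipsi}), and (\ref{E:psinu}), so that the canonically normalized trace density is $\mu = c\,\nu^{-m}e^{\Phi+\Psi}\,dz^1\wedge\ldots\wedge d\bar z^m$ for some $c\in\C$. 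Writing $\mu^{\otimes l}$ in the FOI format $\nu^{-ml}e^{u}\,dx$, the factor $u$ equals $\sum_{i=1}^{l}(\Phi+\Psi)(x_i)$ up to a $\nu$-independent additive constant, which therefore drops out of $du/d\nu$; this $\nu$-independence is exactly what the canonical normalization of $\mu$ guarantees and is essential below. A short computation then shows that the phase-plus-density combination collapses to
\[
    \varphi + u = F^{(l)}+u = \sum_{i=0}^{l}\tilde\Phi(x_i,x_{i+1}) - \Phi(\x) + \sum_{i=1}^{l}\Psi(x_i) + \mathrm{const},
\]
with the cyclic convention $x_{l+1}=x_0=\x$, because the $\sum_i\Phi(x_i)$ terms in $F^{(l)}$ cancel against those hidden in $u$.

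Next I would differentiate this in $\nu$ and apply $K^{(l)}$ to the result times $f$. Since $d/d\nu$ commutes with $\bar\p$ and preserves infinite-order vanishing along the diagonal, $d\tilde\Phi/d\nu$ is an almost analytic extension of $d\Phi/d\nu$, so Lemma \ref{L:gK} converts each term $K^{(l)}\big((d\tilde\Phi/d\nu)(x_i,x_{i+1})\,f\big)$ into a $\star'$-insertion of $I^{-1}(d\Phi/d\nu)$ into the appropriate slot, while the single-variable terms $(d\Psi/d\nu)(x_i)$ act by pointwise multiplication in slot $i$ and the constant $(d\Phi/d\nu)(\x)$ acts by scalar multiplication. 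It suffices to check the identity on products $f=f_1\otimes\cdots\otimes f_l$, extending to all $f$ by the same finite-jet density argument used in the proof of Lemma \ref{L:gK}, since $K^{(l)}$ is supported at $\x^{(l)}$.

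The other side, $\frac{d}{d\nu}K^{(l)}(f)=\big[\frac{d}{d\nu}I(f_1\star'\cdots\star' f_l)\big](\x)$, I would rewrite using the local $\nu$-derivation $\delta^r_\star=\frac{d}{d\nu}+\frac{d\Phi}{d\nu}-R^\star_{d\Phi/d\nu}$ together with the intertwining relation (\ref{E:deltaprime}) of Lemma \ref{L:deldel}. Setting $G=f_1\star'\cdots\star' f_l$ and writing $\frac{d\Phi}{d\nu}=I(I^{-1}\frac{d\Phi}{d\nu})$, this yields
\[
    \frac{d}{d\nu}I(G) = I(\delta^l_{\star'}G) - \frac{d\Phi}{d\nu}I(G) + I\big(G\star' I^{-1}\tfrac{d\Phi}{d\nu}\big),
\]
and expanding $\delta^l_{\star'}G$ by the Leibniz rule distributes the derivation over the factors $f_i$. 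Comparing the two sides slot by slot, the $df_i/d\nu$ terms, the pointwise $(d\Psi/d\nu)f_i$ terms, the scalar $(d\Phi/d\nu)(\x)$ term, and the final term $K^{(l)}(f_1,\ldots,f_l\star' I^{-1}\tfrac{d\Phi}{d\nu})$ coming from the $\tilde\Phi(x_l,x_0)$ edge all match identically.

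The real content sits in the leftover. The terms $-K^{(l)}(\ldots,\frac{d\Psi}{d\nu}\star' f_i,\ldots)$ produced by $\delta^l_{\star'}$ must cancel against the insertions $K^{(l)}(\ldots,I^{-1}\frac{d\Phi}{d\nu}\star' f_i,\ldots)$ coming from the cyclic phase terms, up to the $ml/\nu$ correction. Here (\ref{E:psinu}) is decisive: applying $I^{-1}$ to it and using $I^{-1}1=1$ gives $\frac{d\Psi}{d\nu}+I^{-1}\frac{d\Phi}{d\nu}=m/\nu$, a \emph{constant}. Hence each combined insertion $(\frac{d\Psi}{d\nu}+I^{-1}\frac{d\Phi}{d\nu})\star' f_i=\frac{m}{\nu}f_i$ degenerates to scalar multiplication, and summing over the $l$ slots reproduces exactly $-\frac{ml}{\nu}K^{(l)}(f)$, which is the $n/(2\nu)$ term. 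The main obstacle is therefore not any single estimate but the bookkeeping that aligns the cyclic structure of $F^{(l)}$ with the Leibniz expansion of the $\nu$-derivation; once that alignment is made, the canonical normalization of $\mu$ (which forces the additive constant in $u$ to be $\nu$-independent and supplies (\ref{E:psinu})) makes the cancellation exact, and equation (\ref{E:stronga}) holds with $a=1$.
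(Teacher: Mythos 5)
Your proposal is correct and follows essentially the same route as the paper: reduce strong association to the identity $\frac{d}{d\nu}K^{(l)}(F)=K^{(l)}\bigl(\bigl(\frac{d}{d\nu}+\frac{d(F^{(l)}+u^{(l)})}{d\nu}-\frac{ml}{\nu}\bigr)F\bigr)$ with $u^{(l)}=\sum_i(\Phi+\Psi)(x_i)$, convert the cyclic $\frac{d\tilde\Phi}{d\nu}(x_i,x_{i+1})$ terms into $\star'$-insertions of $I^{-1}(d\Phi/d\nu)$ via Lemma \ref{L:gK}, handle the left-hand side with $\delta^r_\star$ and the intertwining relation of Lemma \ref{L:deldel}, and cancel the leftover insertions slot by slot using (\ref{E:psinu}). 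The bookkeeping and the role of the canonical normalization are exactly as in the paper's argument.
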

\begin{proof}
Let $U \subset M$ be a contractible coordinate chart and $\Phi$ be a potential of the classifying form $\omega$ on $U$. Set
\[
u^{(l)} := \Phi (x_1) + \Psi(x_1) + \ldots + \Phi(x_l) + \Psi(x_l), 
\]
where $\Phi$ and $\Psi$ satisfy  (\ref{E:Psipot}), (\ref{E:phipsi}), and (\ref{E:psinu}). 
Then
\[
    \mu^{\otimes l} = \nu^{-ml} e^{u^{(l)}} dx^{(l)}
\]
for a  properly normalized Lebesgue volume form $dx^{(l)}$ on~$U^l$. We need to check that, according to formula (\ref{E:stronga}) with $a=1$,
\begin{equation}\label{E:twel}
\frac{d}{d \nu} K^{(l)} (F) = K^{(l)} \left( \left(\frac{d}{d\nu} + \frac{d (F^{(l)} + u^{(l)})}{d\nu} - \frac{ml}{\nu}\right)F\right).
\end{equation}
Using Lemma \ref{L:gK}, we get that the right-hand side of (\ref{E:twel}) equals
\begin{eqnarray*}
K^{(l)} \left(\left(\frac{d}{d\nu}  - \frac{d\Phi(x_0)}{d\nu} + \sum_{i=1}^{l}\frac{d\Psi(x_i)}{d\nu} - \frac{ml}{\nu}\right)F\right) +\\
K^{(l)} \left(\left(\sum_{i=0}^{l-1}\frac{d\tilde\Phi(x_i,x_{i+1})}{d\nu}   + \frac{d\tilde\Phi(x_l,x_0)}{d\nu}\right)F\right) =\\
K^{(l)} \left(\left(\frac{d}{d\nu}  - \frac{d\Phi(x_0)}{d\nu} + \sum_{i=1}^{l}\frac{d\Psi(x_i)}{d\nu} - \frac{ml}{\nu}\right)F\right) +\\
\sum_{i=0}^{l-1}K^{(l)} \left(f_1, \ldots, I^{-1}\left(\frac{d\Phi}{d\nu}\right) \star' f_{i+1}, \ldots,f_l\right)+\\
K^{(l)} \left(f_1,  \ldots,f_l \star' I^{-1}\left(\frac{d\Phi}{d\nu}\right) \right).
\end{eqnarray*}
It follows from Lemma \ref{L:deldel} that
\begin{equation}\label{E:diid}
\delta_\star^r I(f_1 \star' \ldots \star' f_l) = \sum_{i=1}^l I \left(f_1 \star' \ldots \star' \delta^l_{\star'} f_i \star' \ldots \star' f_l\right). 
\end{equation}
Using (\ref{E:psinu}), we obtain that
\begin{eqnarray}\label{E:RdPhi}
R^\star_{\frac{d\Phi}{d \nu}}I(f_1 \star' \ldots \star' f_l) = I\left(f_1 \star' \ldots \star' f_l \star' I^{-1}\left(\frac{d\Phi}{d \nu}\right) \right) =\nonumber\\
\frac{m}{\nu} I(f_1 \star' \ldots \star' f_l) - I\left(f_1 \star' \ldots \star' f_l \star' \frac{d\Psi}{d \nu} \right).
 \end{eqnarray}
 We use the explicit formulas for $\delta_\star^r$ and $\delta_{\star'}^l$ in (\ref{E:diid}), take into account (\ref{E:RdPhi}), evaluate (\ref{E:diid}) at $\x$, and set $F = f_1 \otimes \ldots \otimes f_l$, arriving at the equality
\begin{eqnarray}\label{E:expdd}
\left(\frac{d}{d \nu} + \frac{d\Phi}{d \nu} - \frac{m}{\nu}\right)K^{(l)}(F) + K^{(l)}\left(f_1, \ldots, f_l \star' \frac{d\Psi}{d \nu}\right)=\nonumber\\
 K^{(l)} \left(\left(\frac{d}{d \nu} + \sum_{i=1}^l \frac{d\Psi(x_i)}{d \nu}\right) F\right) -\hskip 2cm\\
\sum_{i=1}^l K^{(l)} \left(f_1, \ldots, \frac{d\Psi}{d\nu} \star'  f_i, \ldots,f_l\right).\nonumber
\end{eqnarray}
Now formula (\ref{E:twel}) follows from (\ref{E:expdd}) and (\ref{E:psinu}).
\end{proof}

Finally, we can completely justify formula (\ref{E:klint}).

\begin{theorem}
Let $\mu$ be the canonically normalized trace density of the star product of the anti-Wick type $\star$ with classifying form $\omega$ on a pseudo-K\"ahler manifold $M$ of complex dimension $m$ and $\x$ be a point in $M$. Then the full jet of the formal integral kernel of the FOI $K^{(l)}$ at the point $\x^{(l)} \in M^l$ is given by the formula
\begin{equation}\label{E:klfok}
      e^{F^{(l)}} \mu^{\otimes l}.
\end{equation}
\end{theorem}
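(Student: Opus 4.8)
The plan is to read the statement off from Propositions~\ref{P:nondegcp} and~\ref{P:klstrong} together with the Hermitian normalization conventions of Section~5, the only genuine work being the determination of a single numerical constant. Proposition~\ref{P:nondegcp} guarantees that $\x^{(l)}$ is a nondegenerate critical point of the Hermitian type of $F^{(l)}_{-1}$ with zero critical value, so that those conventions apply to the kernel $e^{F^{(l)}}\mu^{\otimes l}$; here one notes that $\mu^{\otimes l}$ carries the factor $\nu^{-ml}$ in its leading term, so that with $n=2ml$ and $\rho := (\nu^m\mu)^{\otimes l}$ a genuine formal volume form (leading term the $l$-fold product of $\frac{1}{m!}(\omega_{-1}/2\pi)^m$) one has $\nu^{-n/2}e^{F^{(l)}}\rho = e^{F^{(l)}}\mu^{\otimes l}$, exactly the format of a formal oscillatory kernel on $M^l$. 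Proposition~\ref{P:klstrong} guarantees that $K^{(l)}$ is strongly associated with $(F^{(l)},\mu^{\otimes l})$, and Theorem~\ref{T:strong} then says that every FOI strongly associated with this pair is a $\C$-multiple of the unique such FOI $\Lambda$ with leading term $\delta_{\x^{(l)}}$.

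First I would pin down the leading term of $K^{(l)}$ itself. Since $I = 1 + O(\nu)$ and $f\star g = fg + O(\nu)$, formula~(\ref{E:fnK}) gives $K^{(l)}(f_1,\dots,f_l) = f_1(\x)\cdots f_l(\x) + O(\nu)$, so $K^{(l)}_0 = \delta_{\x^{(l)}}$ and hence $K^{(l)} = \Lambda$. On the other hand, the Section~5 interpretation assigns to the kernel $e^{F^{(l)}}\mu^{\otimes l}$ the FOI $\alpha\Lambda$, where $\alpha = e^{\gamma}\lambda$ is built from $\gamma := F^{(l)}_0(\x^{(l)})$, the leading volume form $\tau := (\rho^{\otimes l})_0(\x^{(l)})$, and the constant pseudo-K\"ahler form $\Omega^{(l)} := -i(\p\bar\p F^{(l)}_{-1})(\x^{(l)})$ on $T_{\x^{(l)}}M^l$, through $\tau = \lambda\cdot\frac{1}{(ml)!}(\Omega^{(l)}/2\pi)^{ml}$. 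The whole theorem therefore reduces to the single equality $\alpha = 1$, after which $\int_{(\x^{(l)})}e^{F^{(l)}}f\,\mu^{\otimes l} = \alpha\Lambda(f) = K^{(l)}(f)$ is precisely~(\ref{E:klint}).

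To establish $\alpha=1$ I would evaluate its two factors. For $\gamma$, the identity $\tilde\Phi_0(\x,\x) = \Phi_0(\x)$ forces all the potential contributions in the definition of $F^{(l)}$ to cancel at $\x^{(l)}$, giving $\gamma = 0$ and $e^{\gamma}=1$. For $\lambda$, I would use the explicit Hermitian Hessian~(\ref{E:HermHess}): as an $l\times l$ array of $m\times m$ blocks it is block upper bidiagonal, with diagonal blocks $-g$ and superdiagonal blocks $g$, where $g = (g_{pq})$ is the matrix~(\ref{E:gpqx}). Writing $\Omega^{(l)} = i\tilde H_{PQ}\,d\zeta^P\wedge d\bar\zeta^Q$ with $\tilde H$ the negative of this array, $\tilde H$ is block triangular with diagonal blocks $g$, so its top power $\frac{1}{(ml)!}(\Omega^{(l)})^{ml}$ depends only on $\det\tilde H = (\det g)^l$. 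The same $(\det g)^l$ arises in $\tau$, because $\tau$ is the $l$-fold product of $\frac{1}{m!}(\omega_{-1}/2\pi)^m$ and $\omega_{-1}(\x) = i g_{pq}\,dz^p\wedge d\bar z^q$. Both expressions equal $(\det g)^l(2\pi)^{-ml}$ times the standard Hermitian volume form on $\C^{ml}$, whence $\tau = \frac{1}{(ml)!}(\Omega^{(l)}/2\pi)^{ml}$, i.e.\ $\lambda = 1$ and $\alpha = 1$.

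The main obstacle is precisely this last volume-form computation: one must check that the superdiagonal blocks of the Hermitian Hessian do not affect the top exterior power, and that the sign $(-1)^{ml}$ coming from the diagonal blocks $-g$ is exactly undone when passing from $F^{(l)}_{-1}$ to $\Omega^{(l)} = -i(\p\bar\p F^{(l)}_{-1})(\x^{(l)})$, so that $\det\tilde H$ recovers $(\det g)^l$ with the correct sign and matches the normalization of $\mu$. Once $\alpha = 1$ is secured the conclusion is immediate, the deeper structural input having already been supplied by Propositions~\ref{P:nondegcp} and~\ref{P:klstrong}; everything remaining is bookkeeping of the $\nu$-filtration degrees and of the determinant of a block-triangular matrix.
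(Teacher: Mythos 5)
Your argument is correct and follows the paper's proof essentially step for step: reduce to a single constant $\alpha$ via Proposition \ref{P:klstrong}, Theorem \ref{T:strong} and the leading term of $K^{(l)}$, then evaluate the model Hermitian Gaussian integral on $T_{\x^{(l)}}M^l$ using $F^{(l)}(\x^{(l)})=0$ and the volume-form identity. The only cosmetic difference is that you verify $\frac{1}{(ml)!}\left(\Omega/2\pi\nu\right)^{ml}=\prod_{j}\frac{1}{m!}\left(\omega_{-1}^{(j,j)}/2\pi\nu\right)^m$ by taking the determinant of the block-triangular coefficient matrix, whereas the paper expands $\Omega^{ml}$ directly and invokes the multinomial theorem --- the same computation in different clothing.
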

\begin{proof}
Proposition \ref{P:klstrong} implies that (\ref{E:klfok}) is the formal oscillatory kernel of the FOI $K^{(l)}$ at $\x^{(l)} \in M^l$ up to a nonzero multiplicative constant $\alpha \in \C$. Thus the leading term of the formal integral in (\ref{E:klint}) is 
\[
\alpha\delta_{\x^{(l)}}(f_1 \otimes \ldots \otimes f_l) = \alpha f_1(\x) \cdot \ldots \cdot f_l(\x).
\]
Since $K^{(l)}(1,\ldots,1)=1$, it remains to show that $\alpha=1$. The constant $\alpha$ is given by the model formal Gaussian integral of the Hermitian type on $T_{\x^{(l)}} M^l$. We will use notations introduced in Proposition \ref{P:nondegcp}. Let $U \subset M$ be a contractible chart  containing the point $\x$ with holomorphic coordinates $\{z^p,\bar z^q\}$.
We denoted by $\{z^p_i,\bar z^q_i\}$ the corresponding coordinates on the $i$th copy of $U$ in $U^l$. Now we need the corresponding coordinates $\{w^p_i,\bar w^q_i\}$ on the tangent space $T_{\x^{(l)}} M^l = (T_\x M)^l$. We define (1,1)-forms on the tangent space $T_{\x^{(l)}} M^l$,
\[
     \omega_{-1}^{(j,s)} := i g_{pq} w_j^p \bar w_s^q,
\]
where $g_{pq}$ is given by (\ref{E:gpqx}) and $1 \leq j,s \leq l$. The Hermitian Hessian of the function $F^{(l)}_{-1}$ at $\x^{(l)}$ given in (\ref{E:HermHess}) induces the form (taken with the opposite sign)
\[
   \Omega = \sum_{j=1}^l \omega_{-1}^{(j,j)} - \sum_{j=1}^{l-1} \omega_{-1}^{(j,j+1)}
\]
on $T_{\x^{(l)}} M^l$. The leading term of the volume form $\mu^{\otimes l}$ induces the volume form 
\[
    \prod_{j=1}^l  \frac{1}{m!}\left(\frac{\omega_{-1}^{(j,j)}}{2\pi\nu}\right)^m
\]
on $T_{\x^{(l)}}M^l$. In order to prove that $\alpha=1$, we observe first that since $F^{(l)}(\x^{(l)})=0$, the term $F_0^{(l)}(\x^{(l)})$ does not contribute to the model integral. We see from the normalization condition (\ref{E:Hermnorm}) that it suffices to verify the identity
\begin{equation}\label{E:verif}
    \frac{1}{(ml)!} \left(\frac{\Omega}{2\pi\nu}\right)^{ml} = \prod_{j=1}^l \frac{1}{m!}\left(\frac{\omega_{-1}^{(j,j)}}{2\pi\nu}\right)^m.
\end{equation}
The summands that can nontrivially contribute to the volume form $\Omega^{ml}$ are signed products of $ml$ forms $\omega_{-1}^{(j,s)}$ such that each index $j$ and each index $s$ appear exactly $m$ times. Explicitly, each summand is of the form
\[
   \prod_{j=1}^l  \prod_{k=1}^m(-1)^{s_{jk}-j}\omega_{-1}^{(j, s_{jk})},
\]
where $s_{jk} = j$ or $s_{jk}=j+1$. Since the multiset $\{s_{11}, \ldots, s_{lm}\}$ should contain each number $\{1,2, \ldots,l\}$ with multiplicity $m$, we should have $s_{jk}=j$ for all $j$. By the multinomial theorem,
\[
     \Omega^{ml} = \left( \sum_{j=1}^l \omega_{-1}^{(j,j)}\right)^{ml} = \frac{(ml)!}{(m!)^l} \prod_{j=1}^l \left(\omega_{-1}^{(j,j)}\right)^m,
\]
whence the identity (\ref{E:verif}) follows.
\end{proof}

\section{Appendix}

Let $x^1, \ldots, x^n$ be formal variables. We will use the notations
\[
    \C[[x]] := \C[[x^1, \ldots,x^n]] \mbox{ and } dx := dx^1 \wedge \ldots \wedge dx^n.
\]
In the Appendix we will prove that for an arbitrary pair $(\varphi,dx)$ such that $\x=0$ is a nondegenerate critical point of $\varphi_{-1}$ with zero critical value, there exists a FOI at $\x=0$ associated with this pair. We work with the full jets of functions which are expressed as formal series in~$x^i$.

Let $h_{ij}$ be an invertible symmetric $n \times n$ matrix with constant complex entries and $h^{ij}$ be its inverse matrix.
Set
\[
      \psi = \frac{1}{2}h_{ij}x^ix^j \mbox{ and } \Delta = -\frac{1}{2}h^{ij} \frac{\p^2}{\p x^i \p x^j}.
\]
\begin{lemma}\label{P:delta}
The functional
\[
      \Lambda(f) = \exp \left(\nu \Delta\right) f  |_{x=0}
\]
is the unique FOI at $\x=0$ associated with the pair $(\nu^{-1}\psi, dx)$ and such that $\Lambda(1)=1$.
\end{lemma}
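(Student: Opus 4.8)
The plan is to verify directly the axiomatic characterization of a FOI and then invoke the uniqueness statement already recalled in Section~2. Since the volume form here is $\rho = dx$, we have $u = 0$, so for the phase $\varphi = \nu^{-1}\psi$ the defining condition (\ref{E:axiom}) takes the coordinate form (\ref{E:axiomrho}), namely
\[
   \Lambda\!\left(\frac{\p f}{\p x^i} + \nu^{-1}\frac{\p \psi}{\p x^i}\, f\right) = 0
\]
for all $i$ and all $f$. Because $\psi = \tfrac{1}{2}h_{ij}x^ix^j$ with $h_{ij}$ symmetric, $\p\psi/\p x^i = h_{ij}x^j$, so the condition to check is $\Lambda(\p_i f + \nu^{-1}h_{ij}x^j f)=0$. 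First I would confirm that $\Lambda(f) = \exp(\nu\Delta)f|_{x=0}$ is a well-defined formal distribution supported at $\x=0$: writing $\Lambda(f) = \sum_{k\ge 0}\tfrac{\nu^k}{k!}(\Delta^k f)(0)$, each coefficient of $\nu^k$ depends on finitely many derivatives of $f$ at $0$, and $\nu$-linearity is immediate because $\Delta$ does not involve $\nu$.

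The key algebraic input is an operator conjugation identity. Treating $\p_i$, multiplication by $x^j$, and $\Delta$ as operators on $\C[[x]]$, one computes the commutators $[\Delta,\p_i]=0$ and $[\Delta, x^j] = -h^{jk}\p_k =: -\nabla^j$. Since $\nabla^j$ again has constant coefficients, $[\Delta,\nabla^j]=0$, so the Hadamard series for $e^{\nu\Delta}x^j e^{-\nu\Delta}$ truncates after the linear term:
\[
   e^{\nu\Delta}\, x^j\, e^{-\nu\Delta} = x^j - \nu\,\nabla^j.
\]
Equivalently, as operators, $e^{\nu\Delta}\circ x^j = (x^j - \nu\nabla^j)\circ e^{\nu\Delta}$, while $e^{\nu\Delta}$ commutes with $\p_i$.

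Combining these, and using the contraction $h_{ij}\nabla^j = h_{ij}h^{jk}\p_k = \p_i$, I would compute
\[
   e^{\nu\Delta}\!\left(\p_i f + \nu^{-1}h_{ij}x^j f\right)
   = \p_i e^{\nu\Delta}f + \nu^{-1}h_{ij}x^j e^{\nu\Delta}f - \p_i e^{\nu\Delta}f
   = \nu^{-1}h_{ij}x^j\, e^{\nu\Delta}f,
\]
which vanishes upon setting $x=0$ because of the explicit factor $x^j$. This establishes (\ref{E:axiomrho}). The remaining requirements are immediate: at the zeroth order in $\nu$ one has $\Lambda_0(f)=f(0)$, so $\Lambda_0=\delta_{\x}\neq 0$; and $\Lambda(1)=1$ since $\Delta$ annihilates constants. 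Hence $\Lambda$ is a FOI associated with $(\nu^{-1}\psi, dx)$ normalized by $\Lambda(1)=1$.

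For uniqueness I would appeal to the result of \cite{KS} recalled in Section~2: any two FOIs associated with the same pair differ by a formal multiplicative constant $c = c_0+\nu c_1+\ldots$ with $c_0\ne 0$. If $\Lambda'$ is another FOI for $(\nu^{-1}\psi,dx)$ with $\Lambda'(1)=1$, then $\Lambda'=c\Lambda$, and evaluating at $f=1$ forces $c=1$, so $\Lambda'=\Lambda$. The step I expect to require the most care is the conjugation identity, but there the essential observation is simply that the double commutator $[\Delta,[\Delta,x^j]]$ vanishes, so the exponential series in $\nu$ is finite at each stage and no convergence question arises; everything else is bookkeeping with the index contraction $h_{ij}h^{jk}=\delta_i^k$.
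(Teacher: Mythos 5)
Your proposal is correct and follows essentially the same route as the paper: the key input in both is the commutator $[\Delta, h_{ij}x^j] = -\p/\p x^i$, which the paper applies order by order in $\nu$ (via $[\tfrac{1}{r!}\Delta^r, h_{ij}x^j] = -\tfrac{1}{(r-1)!}\Delta^{r-1}\p_i$, yielding the recurrence $\Lambda_r(h_{ij}x^jf) = -\Lambda_{r-1}(\p_i f)$) and you apply all at once through the exponentiated conjugation $e^{\nu\Delta}x^je^{-\nu\Delta} = x^j - \nu h^{jk}\p_k$. The uniqueness step via the \cite{KS} result recalled in Section~2 is exactly how the paper's statement is meant to be read as well.
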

\begin{proof}
We see that $\Lambda(1)=1$ and $\Lambda = \Lambda_0 + \nu \Lambda_1 + \ldots$ is a formal distribution supported at $\x=0$ such that $\Lambda_0(f)=f(0)$. We have to prove that condition (\ref{E:axiom}) is satisfied, i.e., for any $i = 1, \ldots,n$ and $f \in \C[[x]]$,
\[
      \Lambda\left(\frac{\p f}{\p x^i} + \frac{1}{\nu}\frac{\p \psi}{\p x^i}f\right)=0.
\]
Since $\p \psi/ \p x^i = h_{ij}x^j$, it reduces to the equations $\Lambda_0 \left(h_{ij} x^j f\right) = 0$ and 
\begin{equation}\label{E:rec1}
    \Lambda_r \left(h_{ij} x^j f\right) = - \Lambda_{r-1} \left(\frac{\p f}{\p x^i}\right)
\end{equation}
for $r \geq 1$, where
\[
      \Lambda_r (f) = \frac{1}{r!} \Delta^r f \Big |_{x=0}.
\]
We have
\[
      \left[\Delta, h_{ij} x^j \right] = - \frac{\p}{\p x^i}.
\]
Therefore,
\begin{equation}\label{E:rec2}
   \left[ \frac{1}{r!} \Delta^r, h_{ij} x^j \right] = - \frac{1}{(r-1)!} \Delta^{r-1} \frac{\p}{\p x^i}.
\end{equation}
Applying both sides of (\ref{E:rec2}) to $f$ and setting $x=0$, we obtain (\ref{E:rec1}).
\end{proof}

Below we use the standard grading on the variables $x^i$ and $\nu$,
\[
    |x^i|=1 \mbox{ and } |\nu|=2.
\]
Consider the subspace $\K \subset\C[[\nu^{-1},\nu,x]]$ of elements 
  \[
 f (\nu,x)= \sum_{r=-\infty}^\infty \nu^r f_r(x)
 \]
 satisfying the following two conditions:
 \begin{enumerate}
 \item For each element $f \in \K$ there is an integer $N \in \Z$ such that $\deg f_r \geq N - 2r$ for all $r \in \Z$.  
 Then we say that the filtration degree of $f$ is at least $N$.

 \item Each homogeneous component of $f$ lies in $\C[\nu^{-1},\nu,x]$.   
 \end{enumerate}
 The following statements are easy to verify.
 \begin{itemize}
 \item The space $\K$ is a filtered algebra with respect to the ``pointwise" product with the descending filtration induced by the standard grading. 
 \item The evaluation mapping
  \[
      f(\nu,x) \mapsto f(\nu,0)
  \]
maps $\K$ to $\C((\nu))$.
\item If $h^{ij}$ is a matrix with constant entries, the operator
  \[
     \exp (\nu \Delta)= \exp \left(-\frac{\nu}{2} h^{ij}\frac{\p^2}{\p x^i \p x^j}\right)
  \]
  leaves $\K$ invariant. 
  \item The space $\K$ contains $\C[[x]]((\nu))$.
  \item If an element $f \in \C[[x]]$ has the filtration degree at least three, then
  \[
      \exp \left(\nu^{-1} f\right) \in \K.
  \]
  \end{itemize}
  Let $\nu^{-1}\varphi_{-1}\in \C[[x]]((\nu))$ be a formal phase function such that $\x=0$ is a nondegenerate critical point of $\varphi_{-1}$. Set
  \[
      h_{ij}= \frac{\p^2 \varphi_{-1}}{\p x^i \p x^j}(0).
  \]
  The filtration degree (the order of zero) of
  \[
      \chi := \varphi_{-1}(x) - \varphi_{-1}(0) - \frac{1}{2}h_{ij}x^i x^j 
  \]
  is at least three. We represent $\nu^{-1}\varphi_{-1}$ as
   \begin{equation}\label{E:repphi}
    \frac{1}{\nu}\varphi_{-1}= \frac{1}{\nu} \varphi_{-1}(0) + \frac{1}{2\nu}h_{ij}x^i x^j + \frac{1}{\nu}\chi
 \end{equation}
  The filtration degree of $\nu^{-1}\chi$ is at least one.
  \begin{lemma}\label{P:tilde}
  The formula
  \[
      \tilde\Lambda(f) := \exp \left(\nu\Delta\right) \left(e^{\nu^{-1}\chi} f\right)\Big |_{x=0}
  \]
  gives a FOI at $\x=0$ associated with the pair
 \begin{equation}\label{E:psipair}
     (\nu^{-1}\varphi_{-1}, dx).
 \end{equation}
Its leading term is $\tilde\Lambda_0(f)=f(0)$.
\end{lemma}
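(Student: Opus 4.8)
The plan is to realize $\tilde\Lambda$ as the Gaussian functional $\Lambda$ of Lemma \ref{P:delta} precomposed with multiplication by $e^{\nu^{-1}\chi}$, and then to deduce the FOI axiom for the pair $(\nu^{-1}\varphi_{-1},dx)$ from the one already established for the pair $(\nu^{-1}\psi,dx)$. First I would check that $\tilde\Lambda$ is well defined. Since $\chi$ has filtration degree at least three, the listed properties of $\K$ give $e^{\nu^{-1}\chi}\in\K$; as $\K$ is a filtered algebra containing $\C[[x]]((\nu))$, the product $e^{\nu^{-1}\chi}f$ lies in $\K$ for every formal function $f$. Because $\exp(\nu\Delta)$ preserves $\K$ and the evaluation map sends $\K$ to $\C((\nu))$, the assignment $\tilde\Lambda(f)=\Lambda\!\left(e^{\nu^{-1}\chi}f\right)$ is a well-defined $\C((\nu))$-linear functional with values in $\C((\nu))$.

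Next I would compute the leading term and confirm that $\tilde\Lambda$ is a formal distribution supported at $\x=0$. Expanding $e^{\nu^{-1}\chi}=\sum_k \frac{1}{k!}\nu^{-k}\chi^k$ and $\exp(\nu\Delta)=\sum_m\frac{\nu^m}{m!}\Delta^m$, for a $\nu$-independent $f$ the term $\Delta^m(\chi^k f)\big|_{x=0}$ carries the power $\nu^{m-k}$. Since $\chi^k f$ vanishes to order at least $3k$ while $\Delta^m$ lowers the polynomial degree by $2m$, a nonzero constant term forces $2m\geq 3k$, hence $m-k\geq k/2\geq 0$. Thus no negative powers of $\nu$ appear, the coefficient of $\nu^0$ comes only from $k=m=0$, giving $\tilde\Lambda_0(f)=f(0)$, i.e. $\tilde\Lambda_0=\delta_{\x}$ is nonzero. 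The same bookkeeping shows that the coefficient $\tilde\Lambda_r$ depends only on the jet of $f$ of order at most $2r$, so each $\tilde\Lambda_r$ is a finite-order distribution at $0$ and $\tilde\Lambda$ is indeed supported at $\x$.

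It then remains to verify condition (\ref{E:axiom}), equivalently (\ref{E:axiom2}), for $\tilde\Lambda$ and the pair $(\nu^{-1}\varphi_{-1},dx)$. The crux is the pointwise identity, obtained from the product rule together with the decomposition $\p\varphi_{-1}/\p x^i=\p\psi/\p x^i+\p\chi/\p x^i$ implicit in (\ref{E:repphi}),
\[
 e^{\nu^{-1}\chi}\left(\frac{\p f}{\p x^i} + \frac{1}{\nu}\frac{\p \varphi_{-1}}{\p x^i}\, f\right) = \frac{\p}{\p x^i}\left(e^{\nu^{-1}\chi} f\right) + \frac{1}{\nu}\frac{\p \psi}{\p x^i}\left(e^{\nu^{-1}\chi} f\right).
\]
Applying $\Lambda$ to both sides and setting $g=e^{\nu^{-1}\chi}f$, the right-hand side becomes $\Lambda\!\left(\p g/\p x^i+\nu^{-1}(\p\psi/\p x^i)\,g\right)$, which vanishes precisely because $\Lambda$ is the FOI associated with $(\nu^{-1}\psi,dx)$ by Lemma \ref{P:delta}. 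This is exactly the axiom for $\tilde\Lambda$, so $\tilde\Lambda$ is the asserted FOI, with leading term $\tilde\Lambda_0(f)=f(0)$ as shown above.

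I expect the only genuine obstacle to be the filtered-algebra bookkeeping of the second step: confirming that $e^{\nu^{-1}\chi}f$ really belongs to $\K$ and that the grading estimate $m-k\geq k/2$ simultaneously rules out negative powers of $\nu$, pins the $\nu^0$ coefficient to $\delta_{\x}$, and yields the finite-order (hence support) property. Once these estimates are recorded, the verification of the axiom is the short reduction to Lemma \ref{P:delta} displayed above.
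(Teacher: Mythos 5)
Your proof is correct and follows essentially the same route as the paper: the same filtration count ($2m\geq 3k$, hence $m-k\geq k/2\geq 0$) to rule out negative powers of $\nu$ and pin the leading term to $\delta_{\x}$, and the same reduction of the axiom to Lemma~\ref{P:delta} via the conjugation identity for multiplication by $e^{\nu^{-1}\chi}$. The one slip is the claim that $\tilde\Lambda_r$ depends on the jet of $f$ of order at most $2r$ — your own estimate gives $k\leq 2r$ and $m\leq 3r$, hence order at most $6r$ — but since only finiteness of the order is needed for the support statement, this does not affect the argument.
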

\begin{proof}
We want to prove that
$\tilde\Lambda = \tilde\Lambda_0 + \nu \tilde\Lambda_1 + \ldots$ is a formal distribution supported at zero and $\tilde\Lambda_0(f)=f(0)$.
   Assume that $f \in \C[[x]]$. Consider the term
   \begin{equation}\label{E:klterm}
         \frac{1}{k!}(\nu\Delta)^k \left(\frac{1}{l!}(\nu^{-1} \chi)^l f \right)\Big|_{x=0} = \frac{\nu^{k-l}}{k! l!} \Delta^k (\chi^l f) \Big |_{x=0}
   \end{equation}
 of $\tilde\Lambda(f)$. Since the order of zero of $(\nu^{-1} \chi)^l f$ is at least $3l$, we see that the term (\ref{E:klterm}) is nonzero only if $2k \geq 3l$. Therefore, $\tilde\Lambda = \tilde\Lambda_0 + \nu \tilde\Lambda_1 + \ldots$, where
 \[
    \tilde\Lambda_r(f) = \sum_{k=0}^{3r} \sum_{l=0}^{\lfloor\frac{2k}{3}\rfloor} \frac{1}{k! l!}\Delta^k (\chi^l f) \Big|_{x=0}
\]
for $r\geq 0$. In particular, $\tilde\Lambda_0(f)=f(0)$.

Now we have to show that
\[
     \tilde\Lambda\left(\frac{\p f}{\p x^i} + \frac{1}{\nu}\frac{\p \varphi_{-1}}{\p x^i}f\right)=0
\]
for any $i=1, \ldots,n$ and $f \in \C[[x]]((\nu))$. By representation (\ref{E:repphi}) and Lemma \ref{P:delta},
\begin{eqnarray*}
  \tilde\Lambda\left(\frac{\p f}{\p x^i} +  \frac{1}{\nu}\frac{\p \varphi_{-1}}{\p x^i}f\right)= \tilde\Lambda\left(\frac{\p f}{\p x^i} + \frac{1}{\nu}h_{ij}x^j f + \frac{1}{\nu}\frac{\p \chi}{\p x^i}f\right)=\\
  \exp (\nu\Delta) \left(e^{\nu^{-1}\chi}\left(\frac{\p f}{\p x^i} + \frac{1}{\nu}h_{ij}x^j f + \frac{1}{\nu}\frac{\p \chi}{\p x^i}f\right)\right)=\\
  \exp (\nu\Delta) \left(\left(\frac{\p}{\p x^i} +  \frac{1}{\nu}h_{ij}x^j \right)e^{\nu^{-1}\chi} f\right)= 0.
\end{eqnarray*}
\end{proof}
Using Lemma \ref{P:tilde} and the same arguments as in the second half of its proof, we can prove the following theorem.
\begin{theorem}
Let $\varphi = \nu^{-1}\varphi_{-1} + \varphi_0 + \ldots \in \C[[x]]((\nu))$ be a formal phase function such that $\x=0$ is a nondegenerate critical point of $\varphi_{-1}$. Set $\tilde\varphi := \varphi- \nu^{-1}\varphi_{-1} - \varphi_0(0)$.
Let $\tilde\Lambda$ be the functional built from the phase function $\nu^{-1}\varphi_{-1}$ as in Lemma \ref{P:tilde}. The formula
  \[
      \hat\Lambda(f) :=  \tilde\Lambda \left(e^{\tilde\varphi} f\right)\Big |_{x=0}
  \]
  gives a FOI at $\x=0$ associated with the pair $(\varphi, dx)$. Its leading term is $\hat\Lambda_0(f)=f(0)$.
  \end{theorem}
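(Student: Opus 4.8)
The plan is to reduce the defining axiom (\ref{E:axiom2}) for $\hat\Lambda$ and the pair $(\varphi,dx)$ to the axiom already verified for $\tilde\Lambda$ and the pair $(\nu^{-1}\varphi_{-1},dx)$ in Lemma \ref{P:tilde}, by absorbing the multiplier $e^{\tilde\varphi}$ into the test function and applying the ordinary product rule. First I would check that $\hat\Lambda$ is well defined. By construction $\tilde\varphi = \varphi - \nu^{-1}\varphi_{-1} - \varphi_0(\x)$ carries no negative power of $\nu$, and its zeroth component $\varphi_0 - \varphi_0(\x)$ vanishes at $\x=0$; hence $e^{\tilde\varphi}$ is a well-defined element of $\C[[x]][[\nu]]$ whose $(\nu^0,x^0)$-coefficient equals $1$. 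Therefore $e^{\tilde\varphi} f \in \C[[x]]((\nu)) \subset \K$ for every $f \in \C[[x]]((\nu))$, so $\tilde\Lambda(e^{\tilde\varphi} f)$ makes sense. Since multiplication by $e^{\tilde\varphi}$ is $\nu$-linear and does not enlarge supports, and $\tilde\Lambda$ is a $\nu$-linear formal distribution supported at $\x=0$, the functional $\hat\Lambda$ is again a $\nu$-linear formal distribution supported at $\x=0$.

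Next I would compute the leading term. Taking $f \in \C[[x]]$ independent of $\nu$, the $\nu^0$-component of $e^{\tilde\varphi} f$ is $e^{\varphi_0 - \varphi_0(\x)} f$, and applying the formula $\tilde\Lambda_0(g)=g(0)$ from Lemma \ref{P:tilde} gives $\hat\Lambda_0(f) = e^{(\varphi_0 - \varphi_0(\x))(\x)} f(\x) = f(\x)$, because the exponent vanishes at $\x=0$. This confirms that the leading term is $\hat\Lambda_0(f)=f(0)$.

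The heart of the argument is the axiom. Writing $\p_i := \p/\p x^i$ and using that $\varphi_0(\x)$ is a constant, I would note that $\p_i\varphi = \nu^{-1}\p_i\varphi_{-1} + \p_i\tilde\varphi$, so that the product rule yields the pointwise identity
\[
   e^{\tilde\varphi}\left(\frac{\p f}{\p x^i} + \frac{\p\varphi}{\p x^i}\, f\right)
   = \frac{\p}{\p x^i}\left(e^{\tilde\varphi} f\right)
   + \frac{1}{\nu}\,\frac{\p\varphi_{-1}}{\p x^i}\left(e^{\tilde\varphi} f\right).
\]
Setting $g := e^{\tilde\varphi} f$ and applying $\tilde\Lambda$, the right-hand side becomes $\tilde\Lambda\!\left(\p_i g + \nu^{-1}(\p_i\varphi_{-1})\, g\right)$, which vanishes because $\tilde\Lambda$ is a FOI associated with $(\nu^{-1}\varphi_{-1}, dx)$ by Lemma \ref{P:tilde}. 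Hence $\hat\Lambda\!\left(\p_i f + (\p_i\varphi) f\right)=0$ for all $i$ and all $f$, i.e., $\hat\Lambda$ satisfies (\ref{E:axiom2}) for the pair $(\varphi, dx)$, which is precisely the FOI axiom (\ref{E:axiom}) in the case $\rho = dx$.

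The only genuine obstacle is the bookkeeping that keeps every manipulation inside the filtered algebra $\K$: one must verify that $e^{\tilde\varphi} f$ and all the intermediate expressions really lie in $\K$, so that $\tilde\Lambda$ (defined through $\exp(\nu\Delta)$ followed by evaluation at $0$) and the derivation identity above are legitimate operations there. This is exactly the content of ``the same arguments as in the second half of the proof'' of Lemma \ref{P:tilde}, where the analogous reduction to the Gaussian case of Lemma \ref{P:delta} was carried out. Once this filtration bookkeeping is in place, the verification collapses to the one-line reduction displayed above.
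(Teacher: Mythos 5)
Your proposal is correct and follows essentially the route the paper intends: the paper proves this theorem by invoking Lemma \ref{P:tilde} together with ``the same arguments as in the second half of its proof,'' which is precisely your reduction — absorb the exponential factor (here $e^{\tilde\varphi}$, there $e^{\nu^{-1}\chi}$) into the test function via the product rule, so that the axiom for the pair $(\varphi,dx)$ collapses to the already verified axiom for $(\nu^{-1}\varphi_{-1},dx)$. Your checks that $e^{\tilde\varphi}\in\C[[x]][[\nu]]$ with $(\nu^0,x^0)$-coefficient $1$, and the resulting computation of $\hat\Lambda_0$, match what the paper leaves implicit.
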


\end{document}